\providecommand{\U}[1]{\protect\rule{.1in}{.1in}}
\newtheorem{theorem}{Theorem}
\newtheorem{corollary}[theorem]{Corollary}
\newtheorem{lemma}[theorem]{Lemma}
\newtheorem{proposition}[theorem]{Proposition}
\begin{document}

\title{On Neeman's gradient flows}
\author{Nolan R. Wallach}
\dedicatory{To Jim Lepowsky and Robert Wilson with admiration.}
\maketitle

\begin{abstract}
In his brilliant but sketchy paper on the strucure of quotient varieties of
affine actions of reductive algebraic groups over $\mathbb{C}$ Amnon Neeman
introduced a gradiant flow with remarkable properties. The purpose of this
paper is to study several applications of this flow. In particular we prove
that the cone on a Zariski closed subset of $\mathbb{P}^{n-1}(\mathbb{R})$ is
a deformation retract of $\mathbb{R}^{n}$. We also give an exposition of an
extension to real reductive algebraic group actions of Schwarz's excellent
explanation of Neeman's sketch of a proof of his deformation theorem. This
exposition precisely explains the use of Lojasiewicz gradient inequality. The
result described above for cones makes use of these ideas.

\end{abstract}

\section{Introduction}

The purpose of this note is to give an exposition of how an idea of Amnon
Neeman [N] (and Mumford) and results of Lojasiewicz [L] can be used to prove
some topological results for real projective varieties. For example, it is
proved that the affine cone on a Zariski closed subspace of real projective
space is a deformation retract of $\mathbb{R}^{n}$(see Theorem 11
in section 2). These ideas were applied to geometric invariant theory over
$\mathbb{C}$ by Neeman implying that if $G$ is a reductive group over
$\mathbb{C}$ acting on $\mathbb{C}^{n}$ and $K$ is a maximal compact subgroup
of $G$ (which we can assume is acts unitarily) and if $X$ is a $G$--invariant
subvariety of $\mathbb{C}^{n}$ then the Kempf-Ness set [KN] of $X$ is a strong
$K$--equivariant deformation retract of $X$. We give an argument for the
corresponding result over $\mathbb{R}$ (see also Richardson-Solovay [RS]).
There is a complete exposition of this aspect of the work in the paper of
Schwarz [S] (emphasizing the theory over $\mathbb{C}$). Anyone who has
attempted to read Neeman's paper ([N]), owes a debt of gratitude to the
careful exposition in [S]. [N] contains a weak form of the deformation theorem
in its first two sections. In sections four and later which contain the
more sophisticated topology Neeman mainly uses the weak form. Section three
contains the ideas mentioned above. In that section a sketch of the proof of
the deformation theorem is given on the basis of a \textquotedblleft
conjecture of Mumford\textquotedblright\ (3.1 in the paper) which he extends
by making another conjecture (3.5). In the introduction Neeman writes:

\smallskip

\textquotedblleft Now let us say something about Section 3. When I wrote the
paper it was a largely conjectural section, but now I know that both
Conjecture 3.1 and Conjecture 3.5 are true. Conjecture 3.5 is a special case
of an inequality due to Lojasiewicz, and Conjecture 3.1 can be proved from
Lojasiewicz's inequality using estimates similar to those in Section 3. I
chose not to rewrite the text, because at present I do not feel I could give
an adequate account of the proof of Conjecture 3.1. Although Lojasiewicz's
inequality is enough, a stronger inequality should be true; roughly speaking,
I conjecture that the correct value for $\varepsilon$ in Conjecture 3.5 is 1/2
(see remark 3.7). For this reason I feel the appendix is still important; it
contains evidence for my new conjectures. If I rewrote Section 3 to
incorporate my new conjectures, the new section would be too long, and largely
unconnected with the rest of the paper.\textquotedblright

\smallskip

In this paper we expand a bit on the exposition of [S] and prove a stronger
form of \textquotedblleft Conjecture 3.1\textquotedblright\ (following
Neeman's suggestion). Neeman also conjectured that the correct $\varepsilon$ is
$\frac{1}{2}$. Neeman gives a
sketch of an argument in the case of tori (alluded to in the quote) which we expand in the last section. We observe that his argument doesn't use the
Lojasiewicz theory to get the stronger result.

The result of Lojasiewicz involves mathematics outside of the usual universe
of researchers in the theory of algebraic groups involving the study of real
algebraic (and analytic) inequalities initiated in the Tarski-Seidenberg
theorem (c.f. [H]) and expanded on in Lojasiewicz in his development of real
analytic geometry ([L]). Since this theory is also far away from my expertise,
I show, in the last section, that some of the ideas that only involve
freshman calculus can be used to prove useful weaker results.

\section{Some gradient systems}

Let $\phi\in\mathbb{R}[x_{1},...,x_{n}]$ be a polynomial that is homogeneous
of degree $m$ such that $\phi(x)\geq0$ for all $x\in\mathbb{R}^{n}$. We
consider the gradient system%
\[
\frac{dx}{dt}=-\nabla\phi(x)
\]
relative to the usual inner product on $\mathbb{R}^{n}$, $\left\langle
x,y\right\rangle =\sum x_{i}y_{i}$. Where, as usual,
\[
\nabla\phi(x)=\sum\frac{\partial\phi}{\partial x_{i}}e_{i}%
\]
with $\{e_{1},...,e_{n}\}$ the standard orthonormal basis. Then%
\[
\left\langle \nabla\phi(x),x\right\rangle =m\phi(x).\overset{}{(\ast)}%
\]
So, if we denote by $F(t,x)$ the solution to the system for $t$ near $t=0$
with $F(t,0)=x$ then%
\[
\frac{d}{dt}\left\langle F(t,x),F(t,x)\right\rangle =-2\left\langle \nabla
\phi(F(t,x)),F(t,x)\right\rangle =-2m\phi(F(t,x))\leq0.
\]
This implies

\begin{lemma}
$\left\Vert F(t,x)\right\Vert \leq\left\Vert x\right\Vert $ if $F(s,x)$ is
defined for $0\leq s\leq t$.
\end{lemma}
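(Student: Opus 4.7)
The plan is to read off the lemma directly from the computation displayed just above it. The identity $(\ast)$ together with the defining ODE yields
\[
\frac{d}{ds}\left\Vert F(s,x)\right\Vert ^{2}=-2m\phi(F(s,x)),
\]
and the hypothesis $\phi\geq 0$ forces this derivative to be nonpositive on the entire interval $[0,t]$ where $F(s,x)$ is defined.

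Therefore the function $s\mapsto\left\Vert F(s,x)\right\Vert ^{2}$ is monotone nonincreasing on $[0,t]$. Since $F(0,x)=x$ (interpreting the paper's initial condition as $F(0,x)=x$), integrating the above inequality from $0$ to $t$, or equivalently applying the mean value theorem to this monotone function, gives
\[
\left\Vert F(t,x)\right\Vert ^{2}\leq\left\Vert F(0,x)\right\Vert ^{2}=\left\Vert x\right\Vert ^{2}.
\]
Taking square roots yields the desired inequality.

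There is no serious obstacle here; the only point that requires any care is to be explicit that the argument uses two hypotheses, namely homogeneity of $\phi$ of degree $m$ (which gave the Euler identity $(\ast)$) and nonnegativity of $\phi$ on $\mathbb{R}^{n}$. Both are in force by assumption. The lemma should be viewed as the first consequence of the observation that $\phi\circ F(\cdot,x)$ acts as a Lyapunov function for the gradient system, which will presumably be the workhorse for the deformation theorems promised in the introduction.
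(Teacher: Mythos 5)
Your argument is exactly the one the paper intends: the computation of $\frac{d}{ds}\left\Vert F(s,x)\right\Vert^{2}=-2m\phi(F(s,x))\leq 0$ displayed just before the lemma is precisely the paper's (unstated) proof, and integrating gives the claim. You also correctly read the initial condition as $F(0,x)=x$ despite the paper's typo.
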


We therefore have

\begin{lemma}
$F(t,x)$ is defined for all $t\geq0$, $x\in\mathbb{R}^{n}$ and smooth in
$(t,x)$.
\end{lemma}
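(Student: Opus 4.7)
The proof plan rests on combining standard ODE theory with the \emph{a priori} bound from the previous lemma. The vector field $-\nabla\phi$ is polynomial, hence $C^{\infty}$ on all of $\mathbb{R}^{n}$, so the standard existence, uniqueness, and smooth-dependence theorems for ODE apply. Thus for each initial condition $x$ there exists a maximal half-open interval of existence $[0,T(x))$ on which $F(t,x)$ is defined, and $F$ is smooth jointly in $(t,x)$ on the open set $\{(t,x):0\leq t<T(x)\}$.

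The key step is to show $T(x)=+\infty$. I would argue by contradiction: suppose $T(x)<\infty$ for some $x$. The standard maximality criterion (escape-from-compact-sets) says that in this case the trajectory $t\mapsto F(t,x)$ must eventually leave every compact subset of $\mathbb{R}^{n}$, i.e.\ $\limsup_{t\to T(x)^{-}}\|F(t,x)\|=+\infty$. But Lemma~1 (applied on every $[0,t]\subset[0,T(x))$) gives
\[
\|F(t,x)\|\leq\|x\|\qquad\text{for all }0\leq t<T(x),
\]
so the trajectory stays inside the closed ball of radius $\|x\|$, contradicting the escape condition. Hence $T(x)=+\infty$.

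Smoothness of $F$ on $[0,\infty)\times\mathbb{R}^{n}$ is then immediate: by smooth dependence on initial conditions, the flow of a smooth vector field is smooth on its domain of definition, and we have just shown that this domain is all of $[0,\infty)\times\mathbb{R}^{n}$.

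The only mildly delicate point is the use of the maximality criterion, but this is entirely standard (see, e.g., any text on ODE); the nontrivial input that makes the argument work is the identity $(\ast)$ of the excerpt, which forced $\|F(t,x)\|$ to be nonincreasing and thereby provided the uniform bound needed to prevent finite-time blow-up. No further estimates are required.
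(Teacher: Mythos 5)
Your argument is correct and rests on the same key observation as the paper's proof: the a priori bound $\|F(t,x)\|\leq\|x\|$ from Lemma~1 traps the trajectory in a compact ball and hence rules out finite-time blow-up. The only difference is one of packaging: you invoke the standard escape-from-compact-sets maximality criterion as a black box, whereas the paper effectively re-derives that criterion in this setting by taking a convergent subsequence $F(t_{j_k},x)\to x_o$ and using local existence and smoothness near $x_o$ to extend the solution past any putative finite endpoint $t_o$.
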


\begin{proof}
Assume that $F(t,x)$ is defined for $0\leq t<t_{o}$. Let $\{t_{j}\}$ be a
sequence in $[0,t_{o})$ with $\lim_{j\rightarrow\infty}t_{j}=t_{o}$. Then
since $\left\Vert F(t_{j},x)\right\Vert \leq\left\Vert x\right\Vert $ there is
an infinite subsequence $\{t_{j_{k}}\}$ such that $\left\{  F(t_{j_{k}%
},x)\right\}  $ converges to $x_{o}$. Let $\varepsilon>0$ be such that
$F(s,y)$ is defined and smooth on $|s|<\varepsilon$ and $(-\varepsilon
,\varepsilon)\times B_{\varepsilon}(x_{o})$ ($B_{r}(y)$ is the usual Euclidean
$r$--ball with center $y$). There exists $N$ such that if $k\geq N$ then
$|t_{j_{k}}-t_{0}|<\varepsilon$ and $\left\Vert F(t_{j_{k}},x)-x_{o}%
\right\Vert <\varepsilon$. Fix $k\geq N$. Then $t_{j_{k}}=t_{o}-s$ with
$|s|<\varepsilon$ and $\left\Vert F(t_{o}-s,x)-x_{o}\right\Vert <\varepsilon$.
Thus if $\delta=|\varepsilon-|s||$ and $|u|<\varepsilon$ then $F(s+u,F(t_{o}-s,x))$ is defined.
Hence $F(t_{o}+u,x)$ is defined for $\left\vert u\right\vert <\delta$ and
given by $F(s+u,F(t_{o}-s,x))$.
\end{proof}

The formula $(\ast)$ combined with the Schwarz inequality implies

\begin{lemma}
$\left\Vert \nabla\phi(x)\right\Vert \left\Vert x\right\Vert \geq m\phi(x) $.
Thus if $\left\Vert x\right\Vert \leq r$ then%
\[
\left\Vert \nabla\phi(x)\right\Vert \geq\frac{m}{r}\phi(x).
\]

\end{lemma}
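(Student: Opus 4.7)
The plan is to deduce both inequalities directly from identity $(\ast)$ together with the Cauchy--Schwarz inequality, since the hypothesis $\phi(x)\geq 0$ lets us drop absolute values on the relevant quantities.

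First I would apply Cauchy--Schwarz to the right-hand side of $(\ast)$. This gives
\[
m\phi(x)=\langle\nabla\phi(x),x\rangle\leq|\langle\nabla\phi(x),x\rangle|\leq\|\nabla\phi(x)\|\,\|x\|,
\]
where the first equality uses $\phi(x)\geq 0$ (so $m\phi(x)$ is nonnegative and equals its own absolute value). Rearranging yields the first assertion $\|\nabla\phi(x)\|\,\|x\|\geq m\phi(x)$.

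For the second assertion I would split into the cases $x\neq 0$ and $x=0$. When $x\neq 0$, dividing the first inequality through by $\|x\|$ gives $\|\nabla\phi(x)\|\geq m\phi(x)/\|x\|$; since $\phi(x)\geq 0$ and $\|x\|\leq r$ imply $m\phi(x)/\|x\|\geq m\phi(x)/r$, the bound $\|\nabla\phi(x)\|\geq (m/r)\phi(x)$ follows. When $x=0$, homogeneity of $\phi$ of degree $m$ (with $m\geq 1$, since otherwise the statement is vacuous) forces $\phi(0)=0$, so the inequality reduces to $\|\nabla\phi(0)\|\geq 0$, which is automatic.

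There is really no obstacle here: the lemma is a one-step consequence of $(\ast)$ and Cauchy--Schwarz, with the sign hypothesis on $\phi$ doing all the necessary work to turn an identity into a useful inequality. The only minor point to watch is that the second estimate weakens the sharper bound $\|\nabla\phi(x)\|\geq m\phi(x)/\|x\|$; the uniform constant $m/r$ on the right is what will be convenient later for controlling the gradient flow on balls of radius $r$.
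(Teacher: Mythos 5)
Your proof is correct and follows exactly the paper's intended argument: the paper explicitly states this lemma is obtained by combining formula $(\ast)$ with the Schwarz inequality, which is precisely what you do. Your additional care about the sign of $\phi$ and the edge case $x=0$ is sound and just makes explicit what the paper leaves implicit.
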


The Lojasiewicz gradient inequality [L] implies the following improvement of
the equality in the above Lemma.

\begin{theorem}
Assume that $m>1$. There exists $0<\varepsilon\leq\frac{1}{m-1}$ and $C>0$
both depending only on $\phi$ such that for all $x\in\mathbb{R}^{n}$%
\[
\left\Vert \nabla\phi(x)\right\Vert ^{1+\varepsilon}\left\Vert x\right\Vert
^{1-(m-1)\varepsilon}\geq C\phi(x).
\]

\end{theorem}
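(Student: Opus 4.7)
The plan is to first use the homogeneity of $\phi$ to reduce the proposed inequality on $\mathbb{R}^n$ to one on the unit sphere, and then assemble that inequality from pointwise applications of the Lojasiewicz gradient inequality.

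Writing $x = ru$ with $r = \|x\|$ and $u \in S^{n-1}$, homogeneity gives $\phi(x) = r^m\phi(u)$ and $\nabla\phi(x) = r^{m-1}\nabla\phi(u)$. Substituting, the left-hand side of the inequality equals
\[
r^{(m-1)(1+\varepsilon) + 1 - (m-1)\varepsilon}\|\nabla\phi(u)\|^{1+\varepsilon} = r^m\|\nabla\phi(u)\|^{1+\varepsilon},
\]
while the right-hand side is $Cr^m\phi(u)$; the exponent $1-(m-1)\varepsilon$ on $\|x\|$ is rigged precisely so that the $r^m$ factors cancel. The case $x=0$ is trivial since $m>1$, so the claim is reduced to
\[
\|\nabla\phi(u)\|^{1+\varepsilon} \geq C\phi(u), \quad u \in S^{n-1}.
\]

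To establish this on $S^{n-1}$, I would argue locally at each $u_0 \in S^{n-1}$. If $\phi(u_0) > 0$, formula $(\ast)$ forces $\nabla\phi(u_0) \neq 0$, so on a small neighborhood of $u_0$ the quantity $\|\nabla\phi\|^{1+\varepsilon}/\phi$ is bounded below by continuity. If $\phi(u_0) = 0$, the Lojasiewicz gradient inequality produces an open neighborhood $U_{u_0}$ of $u_0$ in $\mathbb{R}^n$ together with constants $C_{u_0}>0$ and $\theta_{u_0} \in [0,1)$ such that $\|\nabla\phi(x)\| \geq C_{u_0}\phi(x)^{\theta_{u_0}}$ on $U_{u_0}$. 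By compactness, finitely many such neighborhoods cover $S^{n-1}$; taking $\theta \in [0,1)$ to be the largest of the $\theta_{u_0}$ that appear and using that $\phi$ is bounded on $S^{n-1}$, one obtains a uniform inequality $\|\nabla\phi(u)\| \geq C'\phi(u)^\theta$ on $S^{n-1}$.

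The last step is to match the exponents in the statement. Because $\phi$ is bounded on $S^{n-1}$, $\theta$ may be replaced by any $\theta' \in [\theta,1)$ at the cost of adjusting the constant. I would choose $\theta' \in [\max(\theta,(m-1)/m),\,1)$ and set $\varepsilon := (1-\theta')/\theta'$, so that $\varepsilon \in (0, 1/(m-1)]$ and $\theta'(1+\varepsilon)=1$. Raising the sphere inequality to the $(1+\varepsilon)$-th power then yields exactly $\|\nabla\phi(u)\|^{1+\varepsilon} \geq C\phi(u)$, and the homogeneity reduction delivers the result on all of $\mathbb{R}^n$. The main obstacle is the Lojasiewicz gradient inequality, which is invoked as a black box; everything else is a homogeneity-plus-compactness exercise. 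The only subtlety to monitor is the upper bound $\varepsilon \leq 1/(m-1)$, which forces $\theta' \geq (m-1)/m$ in the final step, but this is harmless since we already have the freedom to enlarge $\theta$.
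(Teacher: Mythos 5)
Your proof is correct, but it takes a genuinely different route from the paper's. Both arguments reduce to the unit sphere via homogeneity and invoke the Lojasiewicz gradient inequality, but they differ in \emph{where} Lojasiewicz is applied. The paper applies it exactly once, at the origin (where $\phi$ vanishes), obtaining $\|\nabla\phi(x)\|^{1+\varepsilon}\geq C\phi(x)$ on a small ball $B_r(0)$; it then exploits homogeneity a second time, scaling $x\mapsto \tfrac{r}{2}x$ to propagate this estimate outward to the whole closed unit ball, and restricting to $\|x\|=1$ gives the sphere inequality with a single Lojasiewicz constant. You instead apply Lojasiewicz separately at each zero of $\phi$ on $S^{n-1}$, handle non-zeros by the elementary observation that $(\ast)$ forces $\nabla\phi\neq 0$ there, and glue via compactness of the sphere. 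The paper's route is more economical in its use of Lojasiewicz and makes sharper use of homogeneity; yours is the standard local-to-global template and makes the role of the exponent $1-(m-1)\varepsilon$ (cancelling the $r^m$) explicit. Both handle the constraint $\varepsilon\leq 1/(m-1)$ by the same device of trading a too-favorable exponent against a uniform bound. One small wording slip: in the $\phi(u_0)>0$ case you refer to $\|\nabla\phi\|^{1+\varepsilon}/\phi$ being bounded below before $\varepsilon$ has been chosen; the clean statement is that $\|\nabla\phi\|$ is bounded below near such $u_0$, which together with $\phi\leq M$ on $S^{n-1}$ feeds into the uniform bound $\|\nabla\phi\|\geq C'\phi^\theta$ for whatever $\theta$ emerges from the zeros.
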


To see this we recall the Lojasiewicz inequality

\begin{theorem}
If $\psi$ is a real analytic function on an open subset, $U$, of
$\mathbb{R}^{n}$ and if $x_{o}\in U$ then there exist $C>0$, $\varepsilon>0$
and $r>0 $ such that $B_{r}(x_{o})=\{x\in\mathbb{R}^{n}|\left\Vert
x-x_{o}\right\Vert <r\}\subset U$ and
\[
\left\Vert \nabla\psi(x)\right\Vert ^{1+\varepsilon}\geq C|\psi(x)-\psi
(x_{o})|
\]
if $x\in B_{r}(x_{o})$.
\end{theorem}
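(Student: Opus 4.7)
The plan is to deduce the gradient inequality from the Curve Selection Lemma of semi-analytic geometry. After translation I may assume $x_{o} = 0$ and $\psi(x_{o}) = 0$. If $\nabla\psi(0) \neq 0$, then by continuity $\|\nabla\psi(x)\| \geq c > 0$ on a small ball while $|\psi(x)| \leq C'\|x\|$, so the conclusion is immediate with any $\varepsilon > 0$. I therefore focus on the harder case in which $0$ is a critical point of $\psi$.

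Suppose, toward a contradiction, that the conclusion fails for every triple $(C, \varepsilon, r)$. Then for each $C > 0$ and each $\varepsilon > 0$ the semi-analytic set
\[
A(C, \varepsilon) = \bigl\{ x \in U : \|\nabla\psi(x)\|^{1+\varepsilon} < C\,|\psi(x)|,\ \psi(x) \neq 0\bigr\}
\]
contains points arbitrarily close to $0$. Fix $\varepsilon > 0$ (to be chosen below) and some $C > 0$; the Curve Selection Lemma then produces a real analytic arc $\gamma : [0, \delta) \to U$ with $\gamma(0) = 0$ and $\gamma((0, \delta)) \subset A(C, \varepsilon)$.

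The next step is to extract a contradiction by analysis along the arc. The composite $h(t) := \psi(\gamma(t))$ is real analytic in $t$ and, by construction, not identically zero; write $h(t) = at^{k} + O(t^{k+1})$ with $a \neq 0$ and $k \geq 1$. The chain rule $h'(t) = \langle \nabla\psi(\gamma(t)), \gamma'(t) \rangle$, together with the trivial bound $\|\gamma'(t)\| \leq M$ near $0$, gives $\|\nabla\psi(\gamma(t))\| \geq |h'(t)|/M \geq c\, t^{k-1}$ for small $t$. Substituting into the defining inequality of $A(C, \varepsilon)$ yields
\[
c^{1+\varepsilon}\, t^{(k-1)(1+\varepsilon)} < C|a|\, t^{k}(1 + o(1)),
\]
which, for small $t$, forces $(k-1)(1+\varepsilon) \geq k$, i.e.\ $\varepsilon \geq 1/(k-1)$ for the integer $k$ attached to this particular arc.

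The main obstacle is uniformity. The exponent $k$ depends on the arc produced by the Curve Selection Lemma and is a priori unbounded, so no fixed pair $(C, \varepsilon)$ is refuted by a single arc. Closing the argument requires a uniform upper bound $k \leq k_{0}$ over all real analytic arcs based at $0$ landing in the relevant semi-analytic sets, and establishing such a bound is the substantive content of Lojasiewicz's theorem. I would invoke the stratification of semi-analytic germs (or equivalently Hironaka's desingularization) to extract a finite $k_{0}$ depending only on the germ of $\psi$ at $0$; once that is in hand, choosing $\varepsilon < 1/(k_{0}-1)$ and then $C$ and $r$ sufficiently small makes the arc-wise estimate above impossible for small $t$, completing the contradiction.
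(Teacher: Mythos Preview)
The paper does not prove this statement at all: Theorem~5 is the classical \L ojasiewicz gradient inequality, quoted from~[L] as a black box and then used to derive Theorem~4. So there is no ``paper's proof'' to compare your argument against; the honest comparison is between your sketch and the actual content of \L ojasiewicz's theorem.

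On that score, your write-up is candid about its own gap, and the gap is genuine. Everything up to the inequality $(k-1)(1+\varepsilon)\geq k$ along a single curve is routine; the entire difficulty is the sentence beginning ``Closing the argument requires a uniform upper bound $k\le k_{0}$\ldots''. You do not prove such a bound, you only name the tools (stratification, resolution) that would be needed, and you correctly describe this step as ``the substantive content of \L ojasiewicz's theorem.'' That is an accurate self-assessment: what remains is precisely the theorem you set out to prove. Note also that a bound on the vanishing order of $\psi$ along \emph{all} analytic arcs through $0$ cannot exist in general (e.g.\ $\psi(x,y)=y^{2}-x^{3}$ vanishes to arbitrarily high order along $t\mapsto(t^{2},t^{3}+t^{N})$), so the bound you need is really on arcs constrained to lie in $A(C,\varepsilon)$, and disentangling that from the choice of $\varepsilon$ is exactly where the hard analysis lives. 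A minor technical point: for the Curve Selection Lemma to apply, $A(C,\varepsilon)$ must be semi-analytic, which forces you to take $\varepsilon$ rational and rewrite the defining inequality in polynomial form; this is harmless but should be said.

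In short: your outline is a reasonable heuristic for why the inequality should hold, but as a proof it stops at the doorstep of the real work. Since the paper itself treats the result as input from~[L], citing it is the appropriate move here.
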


To prove the asserted implication we note since $\phi(0)=0$ there exist
$\varepsilon$ and $r$ as in the theorem above so that%
\[
\left\Vert \nabla\phi(x)\right\Vert ^{1+\varepsilon}\geq C|\phi(x)|,x\in
B_{r}(0).
\]
If $\varepsilon>\frac{1}{m-1}$ we argue that we may replace $\varepsilon$ with
any $0<\delta\leq\frac{1}{m-1}$. Since $\nabla\phi(0)=0$ we can choose $s\leq
r$ such that if $\left\Vert x\right\Vert <s$ then $\left\Vert \nabla
\phi(x)\right\Vert \leq1$ hence if $\left\Vert x\right\Vert <s,\left\Vert
\nabla\phi(x)\right\Vert ^{1+\delta}\geq\left\Vert \nabla\phi(x)\right\Vert
^{1+\varepsilon}.$Thus we may assume $0<\varepsilon\leq\frac{1}{m-1}$. We now
may scale in $x$ (using the fact that $\nabla\phi$ is homogeneous of degree
$m-1$) to see that with a different constant $C$ we have%
\[
\left\Vert \nabla\phi(x)\right\Vert ^{1+\varepsilon}\geq C|\phi(x)|,x\in
\overline{B_{1}(0)}.
\]
Thus if $\left\Vert x\right\Vert =1$ we have%
\[
\left\Vert \nabla\phi(x)\right\Vert ^{1+\varepsilon}\left\Vert x\right\Vert
^{1-\left(  m-1\right)  \varepsilon}\geq C|\phi(x)|.
\]
Noting that the homogeneity of the left hand side is 
\[1+\varepsilon)(m-1)+1-(m-1)\varepsilon=m\] 
the theorem now follows.
Since $\phi$ is homogeneous of degree $m$. One is tempted, on the basis of
homogeneity, to think that $\varepsilon=\frac{1}{m-1}$ would be the correct
choice in the theorem above. This is related to Neeman's remark 3.7 as mentioned in the introduction.

\section{The Neeman flow (as explained by Gerry Schwarz)}

We use the notation of the previous section. We take $\varepsilon$ and $C$ as
above (but note that one can very simply get the estimate in the theorem with
$\varepsilon=0)$. If we write $F$ for $F(t,X)$ and $H(t)=\phi(F(t,x))$ then we
have%
\[
H^{\prime}(t)=-d\phi(F)\left(  \nabla\phi(F)\right)  =-\left\Vert \nabla
\phi(F)\right\Vert ^{2}.
\]
If $t\geq0$ and $\left\Vert x\right\Vert \leq r$%
\[
\left\Vert \nabla\phi(F)\right\Vert ^{1+\varepsilon}\left\Vert F\right\Vert
^{1-(m-1)\varepsilon}\geq C\phi(F).
\]
Thus
\[
\left\Vert \nabla\phi(F)\right\Vert ^{1+\varepsilon}\geq\frac{C}%
{r^{1-(m-1)\varepsilon}}\phi(F).
\]
Hence%
\[
\left\Vert \nabla\phi(F)\right\Vert ^{2}\geq\left(  \frac{C}%
{r^{1-(m-1)\varepsilon}}\right)  ^{\frac{2}{1+\varepsilon}}\phi(F)^{\frac
{2}{1+\varepsilon}}.
\]
Thus%
\[
|H^{\prime}(t)|\geq\frac{1}{2}\left(  \frac{C}{r^{1-(m-1)\varepsilon}}\right)
^{\frac{2}{1+\varepsilon}}\phi(F)^{\frac{2}{1+\varepsilon}}=C_{1}%
(r)H(t)^{\frac{2}{1+\varepsilon}}.
\]
This yields (since $H^{\prime}(t)\leq0$)%
\[
-H^{\prime}(t)\geq C_{1}(r)H(t)^{\frac{2}{1+\varepsilon}}\text{.}%
\]
Thus
\[
\frac{d}{dt}H(t)^{-\frac{1}{1+\varepsilon}}=-\frac{H^{\prime}(t)}%
{H(t)^{\frac{2}{1+\varepsilon}}}\geq C_{1}(r)
\]
we conclude that if $t>0$ then%
\[
H(t)^{-\frac{1}{1+\varepsilon}}\geq C_{1}(r)t.
\]
Inverting we have%
\[
H(t)\leq C_{2}(r)t^{-(1+\varepsilon)}%
\]
with $C_{2}(r)=C_{1}(r)^{-(1+\varepsilon)}$. The result of Lojasiewicz gains
us the $\varepsilon>0$. The key aspect of this inequality is that the the only
dependence is on $r$ so it is true for any $F(t,x)$ with $\left\Vert
x\right\Vert \leq r$ and $t>0$. In many cases the easy case $\varepsilon=0$ is
sufficient. We now show how the $\varepsilon>0$ leads to an important result
(the argument is modeled on the exposition of  G.
Schwarz [S]).

We note that the above inequality implies that if $f(t)=t^{1+\delta}$ with
$0<\delta<\varepsilon$ then for $t>0$%
\[
0<H(t)f^{\prime}(t)\leq C_{2}(r)(1+\delta)t^{-1-(\varepsilon-\delta)}.
\]
Let $0<t<s$ then%
\[
H(s)f(s)-H(t)f(t)=\int_{t}^{s}\frac{d}{du}(H(u)f(u))du=
\]%
\[
\int_{t}^{s}H(u)f^{\prime}(u)du+\int_{t}^{s}H^{\prime}(u)f(u)du.
\]
Thus%
\[
-\int_{t}^{s}H^{\prime}(u)f(u)du=\int_{t}^{s}H(u)f^{\prime}%
(u)du+H(t)f(t)-H(s)f(s).
\]
We also note that \[
0\leq H(s)f(s)\leq C_{2}(r)s^{-(1+\varepsilon)}s^{1+\delta}=C_{2}%
(r)s^{-(\varepsilon-\delta)}.
\]
Since $|H^{\prime}(u)|=-H^{\prime}(u)$ this implies
\[
\lim_{s\rightarrow+\infty}\int_{t}^{s}\left\vert H^{\prime}(u)\right\vert
f(u)du=\int_{t}^{\infty}H(u)f^{\prime}(u)du+H(t))f(t) < \infty.
\]
Thus $\sqrt{\left\vert H^{\prime}(u)\right\vert f(u)}$ is in $L^{2}%
([t,+\infty))$ for all $t>0$ and so
\[
\left\vert H^{\prime}(u)\right\vert =\sqrt{\left\vert H^{\prime}(u)\right\vert
f(u)}u^{-\frac{(1+\delta)}{2}}\in L^{1}([t,+\infty)).
\]
All estimates are uniform for $\left\Vert x\right\Vert \leq r<\infty$ so we
have proved:

\begin{theorem}
If $t>0$ then%
\[
\int_{t}^{+\infty}\left\Vert \frac{d}{du}F(u,x)\right\Vert du
\]
converges uniformly for $\left\Vert x\right\Vert \leq r$.
\end{theorem}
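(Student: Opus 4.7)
The plan is to observe that essentially all the work has been done in the computations leading up to the theorem statement, and the theorem is what those computations were aimed at. The key identity is that since $\frac{d}{du}F(u,x)=-\nabla\phi(F(u,x))$ and $H'(u)=-\|\nabla\phi(F(u,x))\|^{2}$, one has
\[
\left\Vert \tfrac{d}{du}F(u,x)\right\Vert =\sqrt{|H^{\prime}(u)|}.
\]
So the theorem reduces to showing $\sqrt{|H^{\prime}(u)|}\in L^{1}([t,+\infty))$ with a bound depending only on $r$.

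First I would recall what has already been established. The decay bound $H(u)\leq C_{2}(r)u^{-(1+\varepsilon)}$ (which depends on $x$ only through the radius $r$) together with the integration-by-parts identity on the interval $[t,s]$ yields, upon letting $s\rightarrow\infty$, that $|H^{\prime}(u)|f(u)\in L^{1}([t,+\infty))$, where $f(u)=u^{1+\delta}$ and $0<\delta<\varepsilon$. Equivalently, $\sqrt{|H^{\prime}(u)|f(u)}\in L^{2}([t,+\infty))$, and the $L^{2}$ norm of this function has a bound depending only on $r$ (and $t,\delta$).

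Next I would write the factorization
\[
\sqrt{|H^{\prime}(u)|}=\sqrt{|H^{\prime}(u)|f(u)}\,\cdot\,u^{-(1+\delta)/2}
\]
and invoke the Cauchy--Schwarz inequality on $[t,+\infty)$. The first factor is in $L^{2}$ by the previous step; the second factor is in $L^{2}$ because $\int_{t}^{\infty}u^{-(1+\delta)}\,du<\infty$ (using $\delta>0$). Their product is therefore in $L^{1}$, giving the desired integrability of $\|\tfrac{d}{du}F(u,x)\|$.

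Finally, uniformity in $\|x\|\leq r$ is inherited from the uniformity of every estimate used: the Lojasiewicz-derived constants $C,C_{1}(r),C_{2}(r)$ depend only on $\phi$ and $r$, and the Cauchy--Schwarz bound on $\int_{t}^{\infty}\sqrt{|H^{\prime}(u)|}\,du$ is an explicit product of quantities controlled solely by $r$, $t$, and $\delta$. There is no genuine obstacle to overcome; the only care required is to verify that the passage $s\rightarrow\infty$ in the integration-by-parts identity is legitimate, which follows from the bound $0\leq H(s)f(s)\leq C_{2}(r)s^{-(\varepsilon-\delta)}\rightarrow 0$ already noted in the text.
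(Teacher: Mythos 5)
Your proposal is correct and follows the paper's own argument essentially verbatim: express $\left\Vert\tfrac{d}{du}F(u,x)\right\Vert$ as $\sqrt{|H'(u)|}$, show $\sqrt{|H'(u)|f(u)}\in L^{2}$ via the integration-by-parts identity and the decay $H(u)\leq C_{2}(r)u^{-(1+\varepsilon)}$, and then apply Cauchy--Schwarz against $u^{-(1+\delta)/2}\in L^{2}$, with all constants depending only on $r$. (You have also silently corrected the paper's typographical slip in the displayed line just before the theorem, where $|H'(u)|$ should read $\sqrt{|H'(u)|}$.)
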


This result implies that if $t\geq0$ then
\[
\int_{t}^{\infty}\frac{d}{du}F(u,x)du
\]
converges absolutely and uniformly for $\left\Vert x\right\Vert \leq r<\infty
$. Noting that if $s>t$ then%
\[
\int_{t}^{s}\frac{d}{du}F(u,x)du=F(s,x)-F(t,x)
\]
we have for $t>0$%
\[
\lim_{s\rightarrow\infty}F(s,x)=\int_{t}^{\infty}\frac{d}{du}F(u,x)du+F(t,x).
\]
So if we set $U(t,x)=F(\frac{t}{1-t},x)$ and define $U(1,x)$ by the limit
above then $U:[0,1]\times\mathbb{R}^{n}\rightarrow\mathbb{R}^{n}$ is
continuous and
\[
\nabla\phi(x)=0\Longleftrightarrow\phi(x)=0
\]
( Lemma 3  and the fact that $0$ is a minimum for $\phi$) we have proved

\begin{theorem}
\label{retract}$U:[0,1]\times\mathbb{R}^{n}\rightarrow\mathbb{R}^{n}$ defines
a strong deformation retraction of $\mathbb{R}^{n}$ onto $Y=\{x\in
\mathbb{R}^{n}|\phi(x)=0\}$.
\end{theorem}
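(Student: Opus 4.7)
The plan is to verify the three defining properties of a strong deformation retraction for the map $U$ as constructed just before the statement. Recall that a strong deformation retraction of $\mathbb{R}^{n}$ onto $Y$ requires $U(0,x)=x$, $U(1,x)\in Y$, $U(t,y)=y$ for all $y\in Y$ and $t\in[0,1]$, together with continuity of $U$ on $[0,1]\times\mathbb{R}^{n}$.

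First I would dispense with the easy conditions. Since $F(0,x)=x$, we have $U(0,x)=F(0,x)=x$. For the invariance on $Y$, suppose $\phi(y)=0$. Since $\phi\geq 0$ on $\mathbb{R}^{n}$, $y$ is a global minimum of $\phi$, so $\nabla\phi(y)=0$, and by uniqueness of solutions the orbit $F(t,y)\equiv y$ for all $t\geq 0$; hence $U(t,y)=y$ for every $t\in[0,1)$, and by definition the limit $U(1,y)=y$ as well. Next, to see $U(1,x)\in Y$, observe that with $H(t)=\phi(F(t,x))$ the estimate derived in the previous section gives $H(t)\leq C_{2}(r)t^{-(1+\varepsilon)}\to 0$ as $t\to\infty$ for $\|x\|\leq r$. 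By the existence of the limit $\lim_{s\to\infty}F(s,x)$ established in the excerpt and continuity of $\phi$, we get $\phi(U(1,x))=\lim_{s\to\infty}\phi(F(s,x))=\lim_{s\to\infty}H(s)=0$, so $U(1,x)\in Y$.

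The main obstacle, and the only step requiring real work, is continuity of $U$ at points of $\{1\}\times\mathbb{R}^{n}$. On $[0,1)\times\mathbb{R}^{n}$ continuity is immediate from smoothness of $F$ and the continuity of $t\mapsto t/(1-t)$. For continuity at $(1,x_{0})$, fix $r>\|x_{0}\|$; I would use the uniform convergence statement of Theorem 7 (the preceding theorem), which asserts that $\int_{t}^{\infty}\|\partial_{u}F(u,x)\|\,du$ converges uniformly in $x$ on $\|x\|\leq r$. Combined with the identity
\[
U(1,x)-U(s,x)=F(\infty,x)-F\!\left(\tfrac{s}{1-s},x\right)=\int_{s/(1-s)}^{\infty}\frac{d}{du}F(u,x)\,du,
\]
this shows $U(s,x)\to U(1,x)$ uniformly on $\|x\|\leq r$ as $s\to 1^{-}$. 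Given a sequence $(s_{k},x_{k})\to(1,x_{0})$ with all $x_{k}$ in a fixed ball, uniform convergence in $x$ together with continuity of $x\mapsto U(1,x)$ (which itself follows from the same uniform-convergence estimate applied to the integral representation) yields $U(s_{k},x_{k})\to U(1,x_{0})$.

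Finally, to see that $x\mapsto U(1,x)$ is continuous I would argue as follows: on a fixed ball $\|x\|\leq r$, approximate $U(1,x)$ uniformly by $U(s,x)=F(s/(1-s),x)$, which is continuous in $x$ for each $s<1$; a uniform limit of continuous functions is continuous. Combined with the joint convergence above, this completes the proof that $U$ is continuous on $[0,1]\times\mathbb{R}^{n}$, and hence that $U$ is the desired strong deformation retraction of $\mathbb{R}^{n}$ onto $Y$.
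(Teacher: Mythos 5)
Your proposal is correct and follows the same route as the paper. The paper folds the continuity argument into the discussion preceding the theorem statement (using the uniform convergence from Theorem 7 in exactly the way you do, via the integral representation of $F(\infty,x)-F(t,x)$), and then the formal proof only records the three pointwise properties $U(0,x)=x$, $U(t,y)=y$ for $y\in Y$, and $U(1,\mathbb{R}^n)=Y$; you simply spell out the continuity step in more detail inside the proof itself, which is fine and matches the intended argument.
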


\begin{proof}
We note since $\nabla\phi(y)=0$ if $y\in Y$ then $F(t,y)=y$ for all $y\in Y $.
Thus $U(0,x)=x$ all $x\in\mathbb{R}^{n}$, $U(t,y)=y$ all $0\leq t\leq1 $ and
all $y\in Y$ and since%
\[
\lim_{t\rightarrow+\infty}\phi(F(t,x))=0
\]
we have $U(1,\mathbb{R}^{n})=Y$.
\end{proof}
A deformation retration of a topological space $X$ onto a closed subspace $Y$ is
a continuous map $U : [0,1] \times X \rightarrow X$ such that $U(1,X)=X$  and $U(t,y)=y$
for all $y \in Y$ and $t \in [0,1]$.

We now derive a few corollaries to this result. The first is obvious.

\begin{corollary}
If $X\subset\mathbb{R}^{n}$ is a closed subset such that $F(t,X)\subset X$ for
all $t\geq0$ then $Y\cap X$ is a strong deformation retraction of $X$.
\end{corollary}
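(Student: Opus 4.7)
The plan is to simply restrict the deformation retraction $U$ constructed in Theorem \ref{retract} to the subset $[0,1]\times X$ and verify that it has all the required properties for a strong deformation retraction of $X$ onto $Y\cap X$.

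First, I would check that $U$ carries $[0,1]\times X$ into $X$. For $t\in[0,1)$ this is immediate from the hypothesis $F(s,X)\subset X$ (for $s\geq 0$) together with the definition $U(t,x)=F(t/(1-t),x)$. For $t=1$, recall that $U(1,x)=\lim_{s\to\infty}F(s,x)$; since each $F(s,x)$ lies in $X$ when $x\in X$, and $X$ is closed in $\mathbb{R}^n$, the limit $U(1,x)$ also lies in $X$. Combining this with the conclusion $U(1,\mathbb{R}^n)=Y$ from Theorem \ref{retract}, I get $U(1,X)\subset Y\cap X$.

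Next I would verify the remaining properties. Continuity of $U$ on $[0,1]\times X$ is inherited from continuity on $[0,1]\times\mathbb{R}^n$. The identity $U(0,x)=x$ for $x\in X$ is a restriction of the same identity on $\mathbb{R}^n$. Finally, for $y\in Y\cap X$ we have $\nabla\phi(y)=0$ (by Lemma 3 and the fact that $0$ is the minimum of $\phi$), so $F(t,y)=y$ for all $t\geq 0$, and therefore $U(t,y)=y$ for all $t\in[0,1]$.

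There is essentially no obstacle here; the only subtle point is the one noted above, namely that closedness of $X$ is exactly what is needed so that the $t=1$ slice stays inside $X$. This is why the author calls the corollary obvious: the flow-invariance $F(t,X)\subset X$ makes the retraction intrinsic to $X$ for $t<1$, and closedness handles the boundary case.
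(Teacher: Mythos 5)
Your proof is correct and is exactly the argument the paper has in mind; the paper declares the corollary ``obvious'' and supplies no details, and your write-up fills them in via the natural route of restricting $U$ to $[0,1]\times X$. You correctly isolate the one point that actually uses the hypotheses: flow-invariance handles $t<1$, and closedness of $X$ handles the limiting slice $t=1$, giving $U(1,X)\subset Y\cap X$, with the reverse inclusion coming from $U(1,y)=y$ for $y\in Y\cap X$.
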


\begin{corollary}
\label{subinv}Let $K$ be a compact subgroup of $GL(n,\mathbb{R})$ and assume
that $\phi(kx)=\phi(x)$ for $k\in K,x\in\mathbb{R}^{n}$. If $X$ is as above
and invariant under $K$ then the strong retraction in the previous corollary
is $K$ equivariant.
\end{corollary}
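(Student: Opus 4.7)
The plan is to show that the gradient flow $F(t,\cdot)$ commutes with the action of $K$, from which $K$-equivariance of $U$, and of its restriction to $X$, follows by continuity. A first step is to arrange that $K$ act by orthogonal transformations with respect to the inner product used to define $\nabla\phi$. Since $K$ is compact, I would average the standard inner product against Haar measure on $K$ to obtain a $K$-invariant positive definite form $\langle\cdot,\cdot\rangle_K$. Inspection of Lemmas 1--3, Theorems 4 and 7, and Corollary 9 shows that none of those arguments uses anything about the inner product beyond positive definiteness and the Cauchy--Schwarz inequality, so all of them go through verbatim with $\langle\cdot,\cdot\rangle_K$ in place of the standard product (only the constants $C$, $r$ get rescaled). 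Equivalently, a linear change of coordinates places $K$ inside $O(n)$.

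With $K$ acting orthogonally and $\phi$ being $K$-invariant, the next step is to verify equivariance of the gradient: $\nabla\phi(kx)=k\nabla\phi(x)$ for every $k\in K$ and $x\in\mathbb{R}^{n}$. Differentiating $\phi(kx)=\phi(x)$ in $x$ gives $d\phi_{kx}\circ k=d\phi_x$; pairing with a vector $v$ and using $k^{\ast}=k^{-1}$ yields $\langle\nabla\phi(kx),kv\rangle=\langle k\nabla\phi(x),kv\rangle$ for every $v$, whence the claim. From this the curve $t\mapsto kF(t,x)$ solves the gradient ODE $\tfrac{d}{dt}(kF)=-\nabla\phi(kF)$ with initial value $kx$, so uniqueness of solutions delivers $F(t,kx)=kF(t,x)$ for all $t\geq 0$.

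Finally, the reparametrization $U(t,x)=F(t/(1-t),x)$ for $0\leq t<1$ inherits this equivariance, and it extends to $t=1$ because $k\in GL(n,\mathbb{R})$ acts continuously and the convergence of $F(s,x)$ as $s\to\infty$ is uniform on bounded sets by Theorem 7. Restricting to the $K$-invariant closed set $X$, which is preserved by $F$ by hypothesis, yields the desired $K$-equivariant strong deformation retraction of $X$ onto $X\cap Y$. I do not foresee a serious obstacle; the only point requiring any care is the bookkeeping verification that the estimates of Sections 2 and 3 survive replacement of the standard inner product by a $K$-invariant one, which is straightforward since any two inner products on $\mathbb{R}^{n}$ induce equivalent norms.
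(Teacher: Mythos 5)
Your proposal is correct and its core is the same as the paper's: show $\nabla\phi(kx)=k\nabla\phi(x)$, deduce from ODE uniqueness that $F(t,kx)=kF(t,x)$, and let the equivariance pass to the reparametrized $U$, including at $t=1$ because $k$ acts continuously and linearly. Where you go beyond the paper is in noticing that the gradient equivariance $\nabla\phi(kx)=k\nabla\phi(x)$ requires $k$ to be orthogonal with respect to the inner product defining $\nabla$: differentiating $\phi(kx)=\phi(x)$ only gives $k^{\ast}\nabla\phi(kx)=\nabla\phi(x)$, which yields the desired identity exactly when $k^{\ast}=k^{-1}$. The paper's proof tacitly assumes $K\subset O(n)$ (true in its applications, where $K=G\cap O(n)$ or one is free to conjugate), whereas the corollary as stated allows any compact $K\subset GL(n,\mathbb{R})$. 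Your fix — average the inner product over $K$ and observe that the entire Section 2--3 machinery (Euler's identity $\langle\nabla\phi(x),x\rangle=m\phi(x)$, the Lojasiewicz estimate up to constants, the decay and integrability bounds) is insensitive to replacing the inner product by an equivalent one — is sound and makes the statement correct in its stated generality, at the mild cost that the flow and retraction are then the ones built from the $K$-invariant metric rather than the Euclidean one. One small remark: the uniform convergence of Theorem 7 is what gives continuity of $U$ on $[0,1]\times\mathbb{R}^n$; equivariance of $U(1,\cdot)$ already follows from pointwise convergence and linearity of $k$, so invoking uniformity there is harmless but not needed.
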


\begin{proof}
We note that the $K$--invariance of $\phi$ implies that $\nabla\phi
(kx)=k\nabla\phi(x)$ for $k\in K,x\in\mathbb{R}^{n}$. Thus
\[
\frac{d}{dt}k^{-1}F(t,kx)=-k^{-1}\nabla\phi(F(t,kx))-\nabla\phi(k^{-1}F(t,kx))
\]
and since
\[
k^{-1}F(0,kx)=x
\]
the uniqueness theorem implies that%
\[
k^{-1}F(t,kx)=F(t,x).
\]

\end{proof}

We now assume that $Y\subset\mathbb{R}^{n}$ is the locus of zeros of
homogeneous polynomials $f_{1},...,f_{m}$ with $\deg f_{i}=r_{i}$. We set
$r=\operatorname{lcm}(r_{1},...,r_{m})$ and
\[
\phi(x)=\sum_{i=1}^{m}(f_{i}^{\frac{r}{r_{i}}})^{2}.
\]
Then $Y=\{x\in\mathbb{R}^{n}|\phi(x)=0\}$. Let $F(t,x)$ be as above for this
choice of $\phi$. Then we can apply the Corollaries to this case.

Finally, let $K$ be a compact subgroup of $GL(n,\mathbb{R})$ and $KY\subset Y
$ with $Y$ the zero locus of $f_{i}$ for $f_{i}$ as above.

\begin{lemma}
Define $\phi_{K}(x)=\int_{K}\phi(kx)dk$ then $\phi_{K}$ is a homogeneous
polynomial of degree $2r,$ $\phi_{K}(x)\geq0$ all $x\in\mathbb{R}^{n}$ and
$Y=\{x\in\mathbb{R}^{n}|\phi_{K}(x)=0\}$.
\end{lemma}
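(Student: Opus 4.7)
The plan is to verify the three assertions in order: $\phi_K$ is a homogeneous polynomial of degree $2r$, it is non-negative, and its zero set is precisely $Y$. Each ingredient is straightforward given the construction of $\phi$, so the only real point to watch is that averaging over $K$ preserves polynomiality and homogeneity; the equality of zero sets is the only place where the invariance hypothesis $KY\subset Y$ gets used.

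First I would show that $\phi$ itself is a homogeneous polynomial of degree $2r$ and $\phi\geq 0$ pointwise. Each $f_i^{r/r_i}$ is a polynomial (since $r/r_i\in\mathbb{Z}$) of degree $r_i\cdot (r/r_i)=r$, so $(f_i^{r/r_i})^2$ is homogeneous of degree $2r$ and non-negative, and the same holds for the sum $\phi$. Moreover $\phi(x)=0$ iff every $f_i^{r/r_i}(x)=0$ iff every $f_i(x)=0$ iff $x\in Y$.

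Next I would average. For any $k\in GL(n,\mathbb{R})$ the composition $x\mapsto \phi(kx)$ is again a homogeneous polynomial of degree $2r$, because $k$ acts linearly; its coefficients depend continuously (indeed polynomially) on $k$. The space $V$ of homogeneous polynomials on $\mathbb{R}^n$ of degree $2r$ is finite dimensional, and $k\mapsto \phi\circ k$ is a continuous $V$-valued function on $K$, so its (normalised Haar) integral $\phi_K=\int_K (\phi\circ k)\,dk$ lies in $V$. Thus $\phi_K$ is a homogeneous polynomial of degree $2r$, and since $\phi(kx)\geq 0$ pointwise the integral satisfies $\phi_K(x)\geq 0$ for all $x\in\mathbb{R}^n$.

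Finally I would identify the zero set. If $x\in Y$, then $KY\subset Y$ gives $kx\in Y$ for every $k\in K$, so $\phi(kx)=0$ for all $k$ and therefore $\phi_K(x)=0$. Conversely, if $\phi_K(x)=0$, then the non-negative continuous integrand $k\mapsto\phi(kx)$ must vanish identically on $K$; evaluating at $k=e$ yields $\phi(x)=0$, hence $x\in Y$. Combining the two directions gives $Y=\{x\in\mathbb{R}^n\mid \phi_K(x)=0\}$, completing the proof. No step here is a genuine obstacle; the only place one might stumble is in confirming that averaging a polynomial against Haar measure over $K$ yields a polynomial, which is handled once by the finite-dimensional argument above.
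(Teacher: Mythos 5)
Your proof is correct and follows essentially the same route as the paper: the paper also expands $\phi_K$ as a sum of non-negative integrals and uses positivity to force the integrand to vanish, concluding $x\in Y$, with the remaining assertions (polynomiality, homogeneity, non-negativity, and the inclusion $Y\subset\{\phi_K=0\}$) taken as immediate. You simply spell out more of the ``obvious'' parts, in particular the finite-dimensionality argument for why averaging preserves polynomiality.
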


\begin{proof}
We note that%
\[
\int_{K}\phi(kx)dk=\sum_{i=1}^{m}\int_{K}(f_{i}(kx))^{\frac{2r}{r_{i}}}dk.
\]
Thus since each integrand is non-negative if $\phi_{K}(x)=0$ then we have for
all $i$%
\[
\int_{K}(f_{i}(kx))^{\frac{2r}{r_{i}}}dk=0
\]
and hence $f_{i}(kx)=0$ for all $k$ and $i$. Hence $x\in Y$. The lemma is now obvious.
\end{proof}

Combining this with the above Corollary we have

\begin{theorem}
If $X\subset\mathbb{P}^{n-1}(\mathbb{R})$ is a $K$ invariant Zariski closed
then there exists a $K$--equivariant strong deformation retract of
$\mathbb{R}^{n}$ to the cone on $X$ in $\mathbb{R}^{n}$.
\end{theorem}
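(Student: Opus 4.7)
The plan is to assemble the machinery developed earlier in the section and apply it to a carefully chosen $K$-invariant polynomial whose zero locus is exactly the cone $CX$. First I will note that, since $X\subset\mathbb{P}^{n-1}(\mathbb{R})$ is Zariski closed, it is the common zero locus in projective space of a finite collection of homogeneous polynomials $f_{1},\ldots,f_{m}\in\mathbb{R}[x_{1},\ldots,x_{n}]$ of positive degrees $r_{1},\ldots,r_{m}$. Because these $f_{i}$ are homogeneous of positive degree they vanish at the origin, so the affine cone $CX$ on $X$ in $\mathbb{R}^{n}$ coincides with $Y=\{x\in\mathbb{R}^{n}\mid f_{i}(x)=0,\ i=1,\ldots,m\}$. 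The hypothesis that $X$ is $K$-invariant as a subset of $\mathbb{P}^{n-1}(\mathbb{R})$ is equivalent to $KY\subset Y$, so $Y$ is $K$-invariant in $\mathbb{R}^{n}$.

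Next I will form, as in the paragraph preceding the theorem, the polynomial $\phi(x)=\sum_{i=1}^{m}f_{i}(x)^{2r/r_{i}}$ with $r=\operatorname{lcm}(r_{1},\ldots,r_{m})$. Then $\phi$ is homogeneous of degree $2r$, non-negative on $\mathbb{R}^{n}$, and $\phi^{-1}(0)=Y$. Applying the preceding lemma, the $K$-average $\phi_{K}(x)=\int_{K}\phi(kx)\,dk$ is again a non-negative homogeneous polynomial of degree $2r$ with the same zero locus $Y$, and by construction it is $K$-invariant.

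Finally I will apply Theorem \ref{retract} to $\phi_{K}$ in place of $\phi$: the associated gradient flow produces a strong deformation retraction $U:[0,1]\times\mathbb{R}^{n}\rightarrow\mathbb{R}^{n}$ of $\mathbb{R}^{n}$ onto $Y=CX$. Since $\phi_{K}$ is $K$-invariant and $\mathbb{R}^{n}$ is trivially both $K$-invariant and forward-invariant under the flow, Corollary \ref{subinv} (taken with the closed subset being all of $\mathbb{R}^{n}$) guarantees that $U$ is $K$-equivariant. This produces the desired $K$-equivariant strong deformation retract of $\mathbb{R}^{n}$ onto the cone $CX$.

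The only real subtlety is the identification in the first step: one must recognize that a Zariski closed subset of $\mathbb{P}^{n-1}(\mathbb{R})$ is cut out by real homogeneous polynomials and that their common zero set in $\mathbb{R}^{n}$ is precisely the affine cone. Once this is granted, no further analytic work is needed, since the Lojasiewicz-based estimates in Theorem \ref{retract} and the averaging trick in Corollary \ref{subinv} handle everything. No new estimates, and no further invocation of the Lojasiewicz inequality beyond what has already been established, are required.
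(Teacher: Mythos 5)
Your proposal matches the paper's argument: form $\phi=\sum_i f_i^{2r/r_i}$ from defining equations of $X$, average over $K$ to obtain the $K$-invariant $\phi_K$ with the same zero locus (the lemma preceding the theorem), and then invoke Theorem \ref{retract} together with Corollary \ref{subinv} to produce the $K$-equivariant strong deformation retraction onto $Y=CX$. This is exactly the route the paper takes, so no further comment is needed.
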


\section{Neeman's theorem.}

We now look at the  main example for which the conditions of the above corollaries are
satisfied.

Let $G$ be a real algebraic subgroup of $GL(n,\mathbb{R})$ invariant under
transpose and let $K=G\cap O(n)$. Let for $x\in\mathbb{R}^{n},X\in
\mathfrak{g}=Lie(G)$
\[
f_{x}(X)=\left\langle Xx,x\right\rangle
\]
then $f_{x}\in\mathfrak{g}^{\ast}$. On $\mathfrak{g}^{\ast}$ we put the inner
product dual to $(X,Y)=\mathrm{tr}(XY^{\ast})$ (here $Y^{\ast}$ is just the
transpose of $Y$). Then we take
\[
\phi(x)=\left\Vert f_{x}\right\Vert ^{2}.
\]
Looking upon $\mathbb{R}^{n}$ as $n\times1$ matrices we have%
\[
f_{x}(X)=\mathrm{tr}(Xxx^{\ast}).
\]
Hence $f_{x}$~$(X)$ is the inner product of $X$ with $P_{\mathfrak{g}%
}(xx^{\ast})$ where $P_{\mathfrak{g}}$ is the orthogonal projection of
$M_{n}(\mathbb{R})$ onto $\mathfrak{g}$. So%
\[
\phi(x)=\mathrm{tr}\left(  P_{\mathfrak{g}}(xx^{\ast})^{2}\right)  .
\]

We now compute the gradient of $\phi$%
\[
d\phi_{x}(v)=2\mathrm{tr}P_{\mathfrak{g}}(vx^{\ast}+xv^{\ast})P_{\mathfrak{g}%
}xx^{\ast})=
\]%
\[
2\mathrm{tr}((vx^{\ast}+xv^{\ast})P_{\mathfrak{g}}xx^{\ast})=2\left\langle
v,P_{\mathfrak{g}}(xx^{\ast})x\right\rangle +2\left\langle x,P_{\mathfrak{g}%
}(xx^{\ast})v\right\rangle =
\]%
\[
4\left\langle v,P_{\mathfrak{g}}(xx^{\ast})x\right\rangle
\]
since $P_{\mathfrak{g}}(xx^{\ast})^{\ast}=P_{\mathfrak{g}}(xx^{\ast})$. Thus%
\[
\nabla\phi(x)=4P_{\mathfrak{g}}(xx^{\ast})x\in T_{x}(Gx).
\]
This implies that $F(t,x)\in Gx$ for all $t\geq0$.

To put this in context we recall the Kempf-Ness theorem over $\mathbb{R}$.
Then $v\in%
\mathbb{R}
^{n}$ will be said to be \emph{critical} if $\left\langle Xv,v\right\rangle
=0$ for all $X\in\mathfrak{g}=Lie(G)$. We note that this is the same as saying
that $\left\langle Xv,v\right\rangle =0$ for all $X\in\mathfrak{p}%
=\{Y\in\mathfrak{g}|Y^{\ast}=Y\}$,$.$ Here is the Kempf-Ness theorem in this
context (the topological assertions are for the subspace topology in
$\mathbb{R}^{n}$).

\begin{theorem}
\label{Kempf-Ness}Let $G,K$ be as above. Let $v\in%
\mathbb{R}
^{n}$.

1. $v$ is critical if and only if $\left\Vert gv\right\Vert \geq\left\Vert
v\right\Vert $for all $g\in G$.

2. If $v$ is critical and $X\in\mathfrak{p}$ is such that $\left\Vert
e^{X}v\right\Vert =\left\Vert v\right\Vert $then $Xv=0$. If $w\in Gv$ is such
that $\left\Vert v\right\Vert =\left\Vert w\right\Vert $then $w\in Kv.$

3. If $Gv$ is closed then there exists a critical element in $Gv$.

4. If $v$ is critical then $Gv$ is closed.
\end{theorem}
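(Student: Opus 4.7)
My plan rests on the Cartan decomposition $G=K\exp(\mathfrak{p})$ (a diffeomorphism $K\times\mathfrak{p}\to G$ available because $G$ is transpose-invariant: $g^{*}g\in G$ has a unique symmetric logarithm $2X\in\mathfrak{p}$, and then $ge^{-X}\in K$) and the analysis of $F_{v}:\mathfrak{p}\to\mathbb{R}$, $F_{v}(X)=\|e^{X}v\|^{2}$. The key input is that along each line $t\mapsto tX$ in $\mathfrak{p}$ the function $F_{v}$ is convex: diagonalizing $X=\sum_{i}\lambda_{i}E_{i}$ yields $F_{v}(tX)=\sum_{i}e^{2t\lambda_{i}}\|E_{i}v\|^{2}$, a non-negative sum of convex functions of $t$. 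One also computes $\frac{d}{dt}\big|_{t=0}F_{v}(tX)=2\langle Xv,v\rangle$, so criticality of $v$ (which the paper has already reduced to $\langle Xv,v\rangle=0$ for $X\in\mathfrak{p}$) is equivalent to $0$ being a stationary point, hence by convexity a minimum, of every such restriction.

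This immediately delivers Parts 1, 2 and 3. For Part 1, criticality gives $F_{v}(X)\geq F_{v}(0)$ for all $X\in\mathfrak{p}$, so $\|e^{X}v\|\geq\|v\|$, and $K$-isometry promotes this to $\|gv\|\geq\|v\|$ for every $g\in G$; the converse is immediate from the derivative formula. For Part 2, if $\|e^{X}v\|=\|v\|$ and $v$ is critical, the convex function $t\mapsto F_{v}(tX)$ attains its minimum at both endpoints of $[0,1]$ and so is constant there, and the eigenvalue expansion forces $\lambda_{i}\|E_{i}v\|=0$ for each $i$, hence $Xv=0$; writing $w=ke^{X}v\in Gv$ with $\|w\|=\|v\|$ then gives $w=kv\in Kv$. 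Part 3 is a direct compactness argument: on the closed set $Gv$ the sublevel set $\{w\in Gv:\|w\|\leq\|v\|\}$ is compact, so $\|\cdot\|$ attains a minimum $w_{0}\in Gv$, and $\|gw_{0}\|\geq\|w_{0}\|$ makes $w_{0}$ critical by Part 1.

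Part 4 is the main obstacle. Given $g_{n}v\to w$ with $v$ critical, I would Cartan-decompose $g_{n}=k_{n}e^{X_{n}}$ and pass to a subsequence with $k_{n}\to k$, reducing to $e^{X_{n}}v\to k^{-1}w$. If $\|X_{n}\|$ stays bounded, extracting a further subsequence gives $w\in Gv$. Otherwise set $Y_{n}=X_{n}/\|X_{n}\|\to Y$ with $\|Y\|=1$; the convexity estimate $F_{v}(tY_{n})\leq(1-t/\|X_{n}\|)F_{v}(0)+(t/\|X_{n}\|)F_{v}(X_{n})$ combined with boundedness of $F_{v}(X_{n})$ forces $F_{v}(tY)\leq\|v\|^{2}$ for all $t\geq 0$, and criticality upgrades this to equality, so Part 2 applied to $tY$ yields $Yv=0$, i.e.\ $Y\in\mathfrak{p}_{v}$. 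The hard step is then to subtract this stabilizer direction off $X_{n}$ to obtain a bounded remainder. I would first establish that $\mathfrak{g}_{v}$ is transpose-invariant when $v$ is critical, via the identity $0=\langle[X,X^{*}]v,v\rangle=\|X^{*}v\|^{2}-\|Xv\|^{2}$ for $X\in\mathfrak{g}_{v}$, so that $\mathfrak{g}_{v}=\mathfrak{k}_{v}\oplus\mathfrak{p}_{v}$; then exploit the $\mathrm{ad}(Y)$-grading of $\mathfrak{p}$ to set up an iterative reduction that terminates by finite-dimensionality of $\mathfrak{p}$. The non-commutation of $Y$ with $X_{n}$, which prevents the naive factorization $e^{X_{n}}=e^{X_{n}-\|X_{n}\|Y}e^{\|X_{n}\|Y}$ and the quick conclusion $e^{\|X_{n}\|Y}v=v$, is precisely the technical obstruction making this iteration delicate.
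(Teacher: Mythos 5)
Your treatment of Parts 1--3 is correct and amounts to exactly the ``calculus'' the paper alludes to: the Cartan decomposition $G=K\exp(\mathfrak{p})$, the convexity of $t\mapsto\|e^{tX}v\|^{2}$ along lines in $\mathfrak{p}$, the derivative identity $\tfrac{d}{dt}\big|_{0}\|e^{tX}v\|^{2}=2\langle Xv,v\rangle$, and (for Part 3) minimizing $\|\cdot\|$ on the closed orbit. Note, however, that the paper itself does not prove this theorem in the text; it states it, cites [W] 3.6.2, and only discusses an auxiliary Proposition used for Part 4.

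Part 4 is where you have a genuine gap, and you say so yourself. After extracting the limiting direction $Y\in\mathfrak{p}_{v}$ from a diverging sequence $X_{n}$, the argument needs to strip the stabilizer component off $X_{n}$ and keep the remainder bounded. Because $Y$ need not commute with $X_{n}$ (nor with the complementary part of $\mathfrak{p}$), the factorization $e^{X_{n}}=e^{X_{n}-\|X_{n}\|Y}\,e^{\|X_{n}\|Y}$ is false and the ``iterative reduction via the $\operatorname{ad}(Y)$-grading'' you gesture at is exactly the nontrivial content of the Richardson--Slodowy proof [RS]; it is not supplied here. The paper takes an entirely different route for Part 4: pass to the Zariski closure $G_{\mathbb{C}}\subset GL(n,\mathbb{C})$, invoke the complex Kempf--Ness theorem (where closedness of the orbit of a critical vector is already known), and use the Proposition that a $G$-critical $v\in\mathbb{R}^{n}$ is $G_{\mathbb{C}}$-critical with $G_{\mathbb{C}}v\cap\mathbb{R}^{n}$ a \emph{finite} union of \emph{open} $G$-orbits; since that intersection is closed and the orbits are finitely many and open in it, each one (in particular $Gv$) is also closed. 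This complexification step replaces the entire unbounded-sequence analysis you were attempting. As written, your Part 4 is a sketch with an acknowledged hole; either carry out the stabilizer reduction in the style of [RS], or adopt the paper's transfer-to-$\mathbb{C}$ argument.
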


We set $Crit_{G}(\mathbb{R}^{n})$ equal to the real algebraic variety of
critical elements. We note that $Crit_{G}(\mathbb{R}^{n})$ is the zero set of
$\phi(x)=\mathrm{tr}P_{\mathfrak{g}}(xx^{\ast})^{2}$.

We can now state the theorem of Neeman over $\mathbb{R}$.

\begin{theorem}
Let $X$ be a $G$--invariant closed subset of $\mathbb{R}^{n}$ then $X\cap
Crit(\mathbb{R}^{n})$ is a strong $K$--equivariant deformation retract of $X$.
\end{theorem}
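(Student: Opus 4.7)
The plan is to apply the machinery of Theorem~\ref{retract} together with the two corollaries that follow it, using the specific $\phi(x)=\operatorname{tr}\bigl(P_{\mathfrak{g}}(xx^{\ast})^{2}\bigr)$ computed in this section. Since this $\phi$ is a non-negative homogeneous polynomial of degree~$4$ whose zero locus is precisely $\operatorname{Crit}_G(\mathbb{R}^{n})$, Theorem~\ref{retract} already provides a strong deformation retraction $U$ of $\mathbb{R}^{n}$ onto $\operatorname{Crit}_G(\mathbb{R}^{n})$. What remains is to verify that the two hypotheses of Corollary~\ref{subinv} are satisfied: that $F(t,X)\subset X$ for all $t\ge 0$, and that $\phi$ is $K$-invariant.

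For the first, I would exploit the key identity $\nabla\phi(x)=4P_{\mathfrak{g}}(xx^{\ast})x\in T_x(Gx)$ derived earlier. The idea is to lift the gradient ODE to a curve $g(t)$ in $G$: define $X(t)=-4P_{\mathfrak{g}}(F(t,x)F(t,x)^{\ast})\in\mathfrak{g}$, solve the time-dependent ODE $g'(t)=X(t)g(t)$ in $G$ with $g(0)=e$, and check by uniqueness that $g(t)x=F(t,x)$. (Existence of $g(t)$ on all of $[0,\infty)$ is not an issue because $F(t,x)$ already exists for all $t\ge 0$ by Lemma~2, so $X(t)$ is a smooth curve in $\mathfrak{g}$.) Consequently $F(t,x)\in Gx\subset X$ for every finite $t\ge 0$, and since $X$ is closed the limit $U(1,x)=\lim_{t\to\infty}F(t,x)$ lies in $\overline{Gx}\subset X$ as well.

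For the $K$-invariance of $\phi$, note that $K=G\cap O(n)$, so for $k\in K$ the operator $\operatorname{Ad}(k)$ preserves $\mathfrak{g}$, and since $k^{\ast}=k^{-1}$ the inner product $(X,Y)=\operatorname{tr}(XY^{\ast})$ on $M_n(\mathbb{R})$ is $\operatorname{Ad}(k)$-invariant. Therefore $P_{\mathfrak{g}}$ commutes with the conjugation action of $k$, giving
\[
P_{\mathfrak{g}}\bigl((kx)(kx)^{\ast}\bigr)=P_{\mathfrak{g}}\bigl(k\,xx^{\ast}k^{-1}\bigr)=k\,P_{\mathfrak{g}}(xx^{\ast})\,k^{-1},
\]
and taking the trace of the square yields $\phi(kx)=\phi(x)$. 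Since $X$ is $G$-invariant it is in particular $K$-invariant, so Corollary~\ref{subinv} applies and produces the desired $K$-equivariant strong deformation retraction of $X$ onto $X\cap\operatorname{Crit}_G(\mathbb{R}^{n})$.

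The main obstacle is really the tangency statement $F(t,x)\in Gx$: everything else is a direct citation of results already proved. One must be a little careful that the time-dependent vector field $X(t)$ lifts to a genuine curve in the group $G$ (rather than only in its Lie algebra) and that this lift exists on the same time interval as $F(t,x)$; once that is in hand, $G$-invariance of $X$ and closedness of $X$ finish the containment argument, and the rest of the proof is mechanical.
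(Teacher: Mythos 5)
Your proposal is correct and takes essentially the same route as the paper: $K$-invariance of $\phi$ plus the tangency $\nabla\phi(x)=4P_{\mathfrak{g}}(xx^{\ast})x\in T_x(Gx)$, hence $F(t,x)\in Gx$, and then an appeal to Corollary~\ref{subinv}. You simply fill in two details the paper leaves implicit, namely the lift of the flow to a time-dependent ODE $g'(t)=X(t)g(t)$ in $G$ establishing $F(t,x)\in Gx$ rigorously, and the computation that $\operatorname{Ad}(k)$ commutes with $P_{\mathfrak{g}}$, both of which are sound.
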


\begin{proof}
We note that $\phi(x)=\mathrm{tr}P_{\mathfrak{g}}(xx^{\ast})^{2}$ is
$K$--invariant and $F(t,x)\in Gx$ thus any $G$--invariant subset of
$\mathbb{R}^{n}$ is invariant under the flow. The theorem follows from
Corollary \ref{subinv}.
\end{proof}

In the course of our proof of this version of the Kempf-Ness theorem we proved
an auxiliary result (see [W], 3.6.2 ). Let $G_{\mathbb{C}}$ be the Zariski
closure of $G$ in $GL(n,\mathbb{C})$ then $G_{\mathbb{C}}$ is invariant under
adjoint and hence is reductive. Let $L=G_{\mathbb{C}}\cap U(n)$ then $L$ is a
maximal compact subgroup of $G_{\mathbb{C}}$ and $L\cap G=K$. The Kempf-Ness
theorem (in the complex case) implies that if $v\in\mathbb{C}^{n}$ is
$G_{\mathbb{C}}$--critical then $G_{\mathbb{C}}v\cap Crit(\mathbb{C}^{n})=Uv$.
The following result was proved

\begin{proposition}
If $v\in$ $\mathbb{R}^{n}$ is $G$--critical then it is $G_{\mathbb{C}}$
critical and $G_{\mathbb{C}}v\cap\mathbb{R}^{n}$ is a finite union of open
$G$--orbits (hence closed).
\end{proposition}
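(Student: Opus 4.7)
The plan is to prove the two claims separately: $G_{\mathbb{C}}$-criticality by a short linearity argument, and the orbit structure via the anti-holomorphic involution $\sigma$ of $\mathbb{C}^{n}$ given by complex conjugation.

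For the first claim I would observe that for any $X\in\mathfrak{g}$ the matrix $X$ is real and $v\in\mathbb{R}^{n}$, so $Xv\in\mathbb{R}^{n}$ and its Hermitian pairing with $v$ in $\mathbb{C}^{n}$ coincides with $\langle Xv,v\rangle$ in $\mathbb{R}^{n}$, which vanishes by the $G$-criticality of $v$. Since the linear functional $X\mapsto\langle Xv,v\rangle$ is $\mathbb{C}$-linear in $X$ and vanishes on the real form $\mathfrak{g}$, it vanishes on the whole of $\mathfrak{g}_{\mathbb{C}}=\mathfrak{g}\oplus i\mathfrak{g}$, which is exactly the criticality condition for $v$ with respect to $G_{\mathbb{C}}$. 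The complex version of Theorem \ref{Kempf-Ness} then forces $G_{\mathbb{C}}v$ to be closed in $\mathbb{C}^{n}$, so $W:=G_{\mathbb{C}}v\cap\mathbb{R}^{n}$ is a closed real algebraic subset of $\mathbb{R}^{n}$.

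For the orbit structure, the key dimension identity is that for any $w\in\mathbb{R}^{n}$ an element $A+iB\in\mathfrak{g}_{\mathbb{C}}$ with $A,B\in\mathfrak{g}$ kills $w$ iff both $A$ and $B$ do, giving $\mathfrak{g}_{\mathbb{C}}^{w}=(\mathfrak{g}^{w})_{\mathbb{C}}$ and $\dim_{\mathbb{R}}\mathfrak{g}^{w}=\dim_{\mathbb{C}}\mathfrak{g}_{\mathbb{C}}^{w}$. Since complex stabilizers on the single orbit $G_{\mathbb{C}}v$ are conjugate in $\mathfrak{g}_{\mathbb{C}}$, this yields $\dim_{\mathbb{R}}Gw=\dim_{\mathbb{R}}Gv=\dim_{\mathbb{C}}G_{\mathbb{C}}v$ for every $w\in W$. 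The involution $\sigma$ preserves the $\sigma$-stable orbit $G_{\mathbb{C}}v$ (as $\sigma v=v$ and $G_{\mathbb{C}}$ is $\sigma$-stable) and restricts to an anti-holomorphic involution on each connected component, so the classical theorem that the fixed set of such an involution is a totally real, real-analytic submanifold of real dimension equal to the complex dimension of the ambient manifold exhibits $W$ as a real-analytic submanifold of $\mathbb{R}^{n}$ of real dimension $\dim_{\mathbb{R}}Gv$. Every $G$-orbit $Gw\subseteq W$ is therefore a smooth submanifold of $W$ of full dimension and hence open in $W$. Since each orbit is open, the complement of any one orbit is a union of open orbits and hence open as well, so every orbit is clopen in $W$; as a closed real algebraic subset of $\mathbb{R}^{n}$ has only finitely many connected components, the partition of $W$ into clopen $G$-orbits is finite and each orbit is closed, as claimed.

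The main obstacle is the third paragraph's appeal to the anti-holomorphic involution theorem. A more hands-on alternative is to verify the pointwise identification $T_{w}(G_{\mathbb{C}}v)\cap\mathbb{R}^{n}=(\mathfrak{g}w\oplus i\mathfrak{g}w)\cap\mathbb{R}^{n}=\mathfrak{g}w=T_{w}(Gw)$ and run a tangent-cone argument to conclude that $W$ is locally equal to $Gw$ near each of its points, bypassing the general theorem at the cost of an analytic calculation.
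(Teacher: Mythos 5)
The paper does not actually reproduce a proof of this proposition; it is stated as an auxiliary result with a pointer to [W], 3.6.2. So there is no in-text argument to compare against, and I evaluate your proposal on its own merits.

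Your argument is correct and complete in all essentials. The first paragraph works: for $X\in\mathfrak g$ and $v\in\mathbb R^n$ the Hermitian pairing $\langle Xv,v\rangle$ is real and agrees with the Euclidean one, and $X\mapsto\langle Xv,v\rangle$ is $\mathbb C$-linear, so vanishing on $\mathfrak g$ forces vanishing on $\mathfrak g_{\mathbb C}=\mathfrak g\oplus i\mathfrak g$, which (after decomposing $\mathfrak g_{\mathbb C}$ into Hermitian and anti-Hermitian parts) is exactly the complex Kempf--Ness criticality condition. Closedness of $G_{\mathbb C}v$ then follows from item 4 of Theorem \ref{Kempf-Ness} in the complex case. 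In the second paragraph the stabilizer computation $\mathfrak g_{\mathbb C}^w=\mathfrak g^w\oplus i\mathfrak g^w$ for $w\in\mathbb R^n$ is immediate (real and imaginary parts must vanish separately), and conjugacy of stabilizers along the single complex orbit gives $\dim_{\mathbb R}Gw=\dim_{\mathbb C}G_{\mathbb C}v$ for every $w\in W$. Applying the standard fact that the fixed-point set of an anti-holomorphic involution of a complex manifold is (where nonempty) a totally real, real-analytic submanifold of real dimension equal to the ambient complex dimension, you get $\dim_{\mathbb R}W=\dim_{\mathbb C}G_{\mathbb C}v=\dim_{\mathbb R}Gw$, so each orbit $Gw$ (an immersed submanifold of that same dimension) is open in $W$. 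Since a closed orbit of an algebraic group is Zariski closed, $W$ is a real algebraic set and therefore has finitely many connected components; the open orbits partition $W$, so there are finitely many, each is clopen in $W$, and hence each is closed in $\mathbb R^n$. The ``hands-on'' alternative you sketch is really the infinitesimal content of the involution theorem, namely that $T_w(G_{\mathbb C}v)\cap\mathbb R^n=(\mathfrak gw+i\mathfrak gw)\cap\mathbb R^n=\mathfrak gw$; turning that into a local identification $W=Gw$ near $w$ still needs either the involution theorem or a real-analytic tangent-cone/implicit-function argument, so it is not truly a shortcut, but the main proof does not depend on it.

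One small stylistic remark: you invoke ``$W$ is a closed real algebraic subset'' to get finitely many components. It is worth flagging (as I did above) that this uses that a Euclidean-closed constructible set over $\mathbb C$ is Zariski closed, so that $G_{\mathbb C}v$ is a complex affine variety and $W$ its real locus. With that spelled out, the finiteness step is airtight.
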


We note that this shows that $4.$in the Kempf-Ness theorem over $\mathbb{C}$
implies $4.$in the theorem over $\mathbb{R}$ (the rest is just calculus).

\begin{corollary}
If $v\in$ $\mathbb{R}^{n}$ is $G$--critical then ~$Lv\cap\mathbb{R}^{n}%
=Kv_{1}\cup\cdots\cup Kv_{r}$ a finite number of $K$--orbits.
\end{corollary}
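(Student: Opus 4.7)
The plan is to combine the preceding proposition with the complex Kempf-Ness theorem applied to the $G_{\mathbb{C}}$-orbit of $v$. First, by the proposition, since $v\in\mathbb{R}^n$ is $G$-critical it is also $G_\mathbb{C}$-critical. The complex Kempf-Ness theorem then gives
\[
G_{\mathbb{C}}v \cap \mathrm{Crit}(\mathbb{C}^n) = Lv,
\]
so in particular every element of $Lv$ lies in $G_{\mathbb{C}}v$ and has the same norm as $v$ (this last fact also follows directly from $L\subset U(n)$, and it implies that $Lv$ is entirely contained in the critical set). Intersecting with $\mathbb{R}^n$ gives
\[
Lv \cap \mathbb{R}^n \subset G_{\mathbb{C}}v \cap \mathbb{R}^n = Gw_1 \sqcup \cdots \sqcup Gw_s,
\]
the decomposition into finitely many open $G$-orbits provided by the proposition.

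Next I would refine the representatives: throw away the orbits $Gw_j$ that do not meet $Lv$, and for each of the remaining orbits, relabel the representative so that $v_i \in Gw_{j_i} \cap Lv$. Such a $v_i$ lies in $Lv \subset \mathrm{Crit}(\mathbb{C}^n)$, hence is $G_\mathbb{C}$-critical, and since it is real it is therefore $G$-critical (by the trivial direction of the proposition, noting $\mathfrak g \subset \mathfrak g_\mathbb{C}$). Moreover $\|v_i\| = \|v\|$.

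Now apply part 2 of the real Kempf-Ness theorem (Theorem \ref{Kempf-Ness}): for any $w \in G v_i$ with $\|w\|=\|v_i\|$, one has $w \in Kv_i$. Every element of $Gv_i \cap Lv$ satisfies this norm condition (as $L\subset U(n)$), so $Gv_i \cap Lv \subset Kv_i$. The reverse inclusion $Kv_i \subset Lv \cap \mathbb{R}^n$ is immediate from $K\subset L$, $v_i\in Lv$ and $Kv_i \subset \mathbb{R}^n$. Assembling the pieces,
\[
Lv \cap \mathbb{R}^n \;=\; \bigcup_{i=1}^{r} \bigl(Gv_i \cap Lv\bigr) \;=\; Kv_1 \cup \cdots \cup Kv_r,
\]
which is the desired finite union of $K$-orbits.

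There is no real obstacle here; the only thing to watch is the choice of representatives: one must pick the $v_i$ inside $Lv$ (rather than arbitrary representatives of the $G$-orbits) in order to apply the Kempf-Ness norm-minimization property at $v_i$. Once that is done, the argument is a direct comparison of the real and complex Kempf-Ness statements together with the finiteness output of the proposition.
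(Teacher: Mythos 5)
Your proof is correct and follows essentially the same route as the paper: decompose $G_{\mathbb{C}}v\cap\mathbb{R}^{n}$ into finitely many closed $G$-orbits via the proposition, invoke the complex Kempf--Ness identity $G_{\mathbb{C}}v\cap Crit(\mathbb{C}^{n})=Lv$, and then use part 2 of Theorem \ref{Kempf-Ness} to collapse the critical set of each $G$-orbit to a single $K$-orbit. The only cosmetic difference is that you source the critical representatives $v_{i}$ directly from $Lv$ (using $L\subset U(n)$ for the norm equality), while the paper instead produces a critical element in each closed $G$-orbit via part 3 of the Kempf--Ness theorem and then intersects with $Crit(\mathbb{R}^{n})$.
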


\begin{proof}
Since $G_{\mathbb{C}}v\cap\mathbb{R}^{n}$ is closed, the above proposition and
3. in the Kempf-Ness theorem imply that $G_{\mathbb{C}}v\cap\mathbb{R}^{n}=$
$\cup_{j=1}^{r}Gv_{j}$ with $v_{j}$ critical in $\mathbb{R}^{n}$. Since
$G_{\mathbb{C}}v\cap Crit(\mathbb{C}^{n})=Lv$, and $Crit(\mathbb{C}^{n}%
)\cap\mathbb{R}^{n}=Crit(\mathbb{R}^{n})$ we have%
\[
Lv\cap\mathbb{R}^{n}=\left(  \cup_{j=1}^{r}Gv_{j}\right)  \cap Crit(\mathbb{R}%
^{n})=
\]%
\[
\cup_{j=1}^{r}\left(  Gv_{j}\cap Crit(\mathbb{R}^{n})\right)  =Kv_{1}%
\cup\cdots\cup Kv_{r}\text{.}%
\]

\end{proof}

The $r$ in the statement can be larger than $1$. This is the reason why the
next section is over $\mathbb{C}$.

\section{An elementary result}

We retain the notation of the previous section. In this section we explain how
the elementary estimate (that only uses Freshman calculus)
\[
\phi(F(t,x))\leq\frac{C(\left\Vert x\right\Vert )}{t}%
\]
for $t>0$ can prove a useful weakening of Neeman's theorem for actions of
connected reductive algebraic groups over $\mathbb{C}$. Let $G\subset
GL(n,\mathbb{C})$ be Zariski closed and invariant under adjoint. Let $K$ be
the intersection of $G$ with $U(n).$ We look upon $\mathbb{C}^{n}$ as
$\mathbb{R}^{2n}=\mathbb{R}^{n}\oplus i\mathbb{R}^{n}$ and $G$ as a real
algebraic group. Thus $K$ is also the intersection of $G$ with $O(2n)$. In
this context if $v\in\mathbb{R}^{2n}$ then $\overline{Gv}$ contains a unique
closed orbit and $\overline{Gv}\cap Crit(\mathbb{R}^{2n})$ is a single
$K$--orbit. We also note that $F(t,kv)=kF(t,v)$. Thus $F$ induces a flow on
$\mathbb{R}^{2n}/K$, which we denote by $H(t,Kx)$.

We note

\begin{theorem}
Let $v\in\mathbb{C}^{n}$ then $\lim_{t\rightarrow+\infty}H(t,Kv)=\overline
{Gv}\cap Crit(\mathbb{R}^{2n})=Ku$.
\end{theorem}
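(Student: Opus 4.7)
The plan is to combine the elementary decay estimate $\phi(F(t,v)) \le C(\|v\|)/t$ with the geometric fact $F(t,v)\in Gv$ and then pass to the quotient $\mathbb{R}^{2n}/K$, where the target $Ku$ collapses to a single point. Convergence of the individual trajectory $F(t,v)$ in $\mathbb{R}^{2n}$ is \emph{not} claimed (that is exactly where the Lojasiewicz improvement $\varepsilon>0$ would be needed); we only claim convergence of its image in the Hausdorff quotient $\mathbb{R}^{2n}/K$.

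First I would record the ingredients from the previous section. From the gradient computation $\nabla\phi(x)=4P_{\mathfrak g}(xx^\ast)x\in T_x(Gx)$ one has $F(t,v)\in Gv$ for all $t\ge 0$; from Lemma~1 one has $\|F(t,v)\|\le \|v\|$, so the trajectory is bounded. Applying the $\varepsilon=0$ form of the decay argument of Section~3 gives $\phi(F(t,v))\le C(\|v\|)/t\to 0$ as $t\to\infty$.

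Next I would analyze the $\omega$-limit set in $\mathbb{R}^{2n}$. Take any sequence $t_n\to\infty$; by boundedness, extract a subsequence along which $F(t_{n_j},v)\to w$ in $\mathbb{R}^{2n}$. Since $F(t_{n_j},v)\in Gv$, we have $w\in\overline{Gv}$; since $\phi$ is continuous and $\phi(F(t_{n_j},v))\to 0$, we have $\phi(w)=0$, i.e.\ $w\in Crit(\mathbb{R}^{2n})$. By the standing hypothesis $\overline{Gv}\cap Crit(\mathbb{R}^{2n})=Ku$, so every $\omega$-limit point of the trajectory lies in the single $K$-orbit $Ku$.

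Finally I would descend to the quotient. Let $\pi:\mathbb{R}^{2n}\to\mathbb{R}^{2n}/K$ be the orbit map, which is continuous; since $K$ is compact, the quotient is Hausdorff and $\pi$ maps compact sets to compact sets. The image $\pi(Ku)$ is the single point $\pi(u)$, and the trajectory $\pi(F(t,v))=H(t,Kv)$ lies in the compact set $\pi(\overline{B_{\|v\|}(0)})$. Any subsequential limit of $H(t_n,Kv)$ in the quotient is, after passing to a further subsequence, of the form $\pi(w)$ for some $\omega$-limit point $w\in Ku$ of the trajectory in $\mathbb{R}^{2n}$, hence equals $\pi(u)$. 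A precompact curve in a Hausdorff space with a unique subsequential limit point converges to that point, so $H(t,Kv)\to\pi(u)=Ku$ as claimed.

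The only real subtlety is the last step: one must resist the temptation to ask for convergence of $F(t,v)$ itself in $\mathbb{R}^{2n}$. What makes the argument work is that $Ku$ is a single $K$-orbit (guaranteed by the complex Kempf--Ness theorem via the setup at the start of the section), so its preimage in the quotient is a point; if $\overline{Gv}\cap Crit$ contained several $K$-orbits the $\omega$-limit set in the quotient could in principle be disconnected and this elementary argument would break down—this is precisely why the section is stated over~$\mathbb{C}$.
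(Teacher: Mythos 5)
Your proof is correct and follows essentially the same route as the paper: apply the $\varepsilon=0$ decay estimate to get $\phi(F(t,v))\to 0$, use $\|F(t,v)\|\le\|v\|$ to extract convergent subsequences whose limits lie in $\overline{Gv}\cap Crit(\mathbb{R}^{2n})=Ku$, and conclude that $H(t,Kv)\to Ku$ in the quotient. Your added remarks about Hausdorffness of $\mathbb{R}^{2n}/K$ and why a precompact curve with a unique subsequential limit converges simply make explicit what the paper compresses into the final sentence ``This implies the theorem.''
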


\begin{proof}
The above estimate implies that%
\[
\lim_{t\rightarrow+\infty}\phi(F(t,v))=0.
\]
We have also seen that if $t>0$, then $\left\Vert F(t,v)\right\Vert
\leq\left\Vert v\right\Vert .$ Let $\{t_{j}\}$ be a sequence in $\mathbb{R}%
_{>0}$ such that $\lim_{j\rightarrow\infty}t_{j}=+\infty$. The sequence
$\left\{  F(t_{j},v)\right\}  $ is bounded. Let $F(t_{j_{k}},v)$ be a
convergent subsequence. Then $\lim_{k\rightarrow\infty}F(t_{j_{k}},v)=u\in$
$\overline{Gv}$ and $\phi(u)=0$. Thus $Ku=\overline{Gv}\cap Crit(\mathbb{R}%
^{2n})$. Thus every convergent subsequence of $\left\{  H(t_{j},Kv)\right\}  $
converges to $Ku$. This implies the theorem.
\end{proof}

\section{Neeman's argument for Tori}

As indicated in the introduction Neeman conjectured that in the context of
Section 4 (there $\phi $ is homogeneous of degree 4) there should exist $C>0$
such that for all $x$%
\[
C\left\Vert \nabla \phi (x)\right\Vert ^{\frac{4}{3}}\geq \phi (x). 
\]

As evidence for this assertion he gave a sketch of a proof for the case when 
$G$ (in that section is commutative). We will devote this section to filling
out his brilliant proof this case. We first set up the general question. Let 
$G$ be a closed subgroup of $GL(N,\mathbb{R})$ such that $G$ is invariant
under adjoint. Let $\mathfrak{p}=\{X\in Lie(G)|X^{\ast }=X\}$. We have seen
that if $P$ is the orthogonal projection of $M_{N}(\mathbb{R})$ onto $%
\mathfrak{p}$ (here we are using the inner product $\mathrm{tr}XY^{\ast }$)
then $\phi (x)=tr\left( P(xx^{\ast })\right) ^{2}$ (here we look upon $x$ as
an $N\times 1$ column). Now if $X_{1},...,X_{n}$ is an orthonormal basis of $%
\mathfrak{p}$ then%
\[
P(xx^{\ast })=\sum_{i}\mathrm{tr}(X_{i}xx^{\ast })X_{i}=\sum_{i}\left\langle
X_{i}x,x\right\rangle X_{i}
\]%
and 
\[
\phi (x)=\sum_{i}\left\langle X_{i}x,x\right\rangle ^{2}.
\]%
We also note that 
\[
\nabla \phi (x)=4\sum_{i}\left\langle X_{i}x,x\right\rangle X_{i}x.
\]%
Hence 
\[
\left\Vert \nabla \phi (x)\right\Vert ^{2}=\sum_{i,j}\left\langle
X_{i}v,v\right\rangle \left\langle X_{j}v,v\right\rangle \left\langle
X_{i}v,X_{j}v\right\rangle .
\]%
Thus the theorem below implies the desired result for the case when $G$ is
abelian. The following lemma plays an important role in the proof of the
theorem and since it may not be well known so we include a proof before
embarking on the proof of the theorem.

Let $\left( V,\left\langle ...,...\right\rangle \right) $ be a finite
dimensional inner product space over $\mathbb{R}$.

\begin{lemma}
Let $v_{1},...,v_{n}\in V$ spanning an $m$--dimensional vector space. Then
there exists $A=\left[ a_{ij}\right] _{1\leq i,j\leq n}$ an orthogonal
matrix over $\mathbb{R}$ and $c_{1},...,c_{k}$ in $\mathbb{R}_{>0}$ such
that if $z_{i}=\sum_{j}a_{ij}v_{j}$ then $z_{j}=0$ for $j>m$ and%
\[
\left\langle z_{i},z_{j}\right\rangle =\delta _{ij}c_{i},1\leq i,j\leq m. 
\]
\end{lemma}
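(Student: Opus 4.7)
The plan is to reduce the lemma to the spectral theorem applied to the Gram matrix of $v_1,\ldots,v_n$. The key observation is that $z_i=\sum_j a_{ij}v_j$ gives inner products
\[
\langle z_i,z_k\rangle=\sum_{j,l}a_{ij}a_{kl}\langle v_j,v_l\rangle=(AMA^{T})_{ik},
\]
where $M=[\langle v_i,v_j\rangle]_{1\le i,j\le n}$ is the Gram matrix. So the conclusion is equivalent to finding an orthogonal $A$ with $AMA^{T}$ diagonal, with $m$ positive diagonal entries $c_1,\ldots,c_m$ followed by $n-m$ zeros.

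First I would note that $M$ is real symmetric and positive semi-definite: for any $c=(c_1,\ldots,c_n)\in\mathbb{R}^{n}$,
\[
c^{T}Mc=\sum_{i,j}c_ic_j\langle v_i,v_j\rangle=\Bigl\|\sum_i c_iv_i\Bigr\|^{2}\ge 0.
\]
Second, I would identify the rank of $M$ with $m=\dim\mathrm{span}\{v_1,\ldots,v_n\}$. Indeed, the same computation shows $c^{T}Mc=0$ iff $\sum c_iv_i=0$, and since $M$ is positive semi-definite this is equivalent to $Mc=0$. Thus $\ker M$ equals the space of linear relations among the $v_i$'s, which has dimension $n-m$, so $\mathrm{rank}\,M=m$.

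Third, I would invoke the spectral theorem: there exists an orthogonal matrix $A$ such that $AMA^{T}=\mathrm{diag}(c_1,\ldots,c_n)$, and after permuting rows of $A$ (which is still orthogonal) we may assume $c_1,\ldots,c_m>0$ and $c_{m+1}=\cdots=c_n=0$. Defining $z_i=\sum_j a_{ij}v_j$, the displayed identity above gives $\langle z_i,z_j\rangle=\delta_{ij}c_i$ for $1\le i,j\le m$, and $\|z_j\|^{2}=c_j=0$, hence $z_j=0$, for $j>m$.

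No step looks like a genuine obstacle here; the only mild subtlety is the identification $\mathrm{rank}\,M=m$, which one has to justify because in general a Gram matrix over a field need not have rank equal to the span, but over $\mathbb{R}$ with a positive definite inner product it does, via the positive semi-definiteness argument above.
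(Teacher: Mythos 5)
Your proof is correct, but it takes a genuinely different and noticeably cleaner route than the paper's. You observe that the conclusion is exactly the statement that the Gram matrix $M=[\langle v_i,v_j\rangle]$ can be diagonalized by an orthogonal conjugation $AMA^{T}$, with the nonzero diagonal entries placed first; since $M$ is real symmetric and positive semidefinite of rank $m$ (the rank computation via $c^{T}Mc=\|\sum c_iv_i\|^{2}$ is exactly the right justification), the spectral theorem plus a row permutation of $A$ finishes the argument in one step. The paper instead proceeds constructively: it permutes so that $v_1,\ldots,v_m$ are a basis of the span, writes the remaining $v_{m+j}$ in terms of these, builds a block matrix $B$ sending $(v_1,\ldots,v_n)$ to $(v_1,\ldots,v_m,0,\ldots,0)$, extracts the orthogonal factor $k$ from an Iwasawa (Gram--Schmidt) decomposition $B=uak$ so that $k$ already kills the last $n-m$ combinations, and only then applies a small $m\times m$ orthogonal diagonalization $T$ to the Gram form of the surviving vectors, setting $A=\mathrm{diag}(T,I)\,k$. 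The two proofs buy different things: your spectral-theorem argument is shorter, more conceptual, and makes the role of the Gram matrix transparent; the paper's Iwasawa argument is more explicit about how $A$ is assembled, which fits the flavor of the surrounding section, but it is longer and requires a bit of bookkeeping to check that the intermediate matrix manipulations produce the claimed result. Either is a complete proof, and the identification $\operatorname{rank} M = m$ that you flagged as the one subtle point is handled correctly.
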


\begin{proof}
After permuting the $v_{j}$ we may assume that $v_{1},...,v_{k}$ are
linearly independent. Let 
\[
v_{m+j}=\sum_{i=1}^{m}x_{j,i}v_{i}\text{.}
\]%
Let $X$ be the $n-m$ by $m$ matrix with entries $x_{ij}$. We form the block
matrix%
\[
B=\left[ 
\begin{array}{cc}
I_{m} & 0 \\ 
X & -I_{n-m}%
\end{array}%
\right] =\left[ b_{ij}\right] 
\]%
with $I_{r}$ the $r\times r$ identity matrix. Then $\sum_{j}b_{ij}v_{j}=v_{i}
$ for $i\leq m$ and $\sum_{j}b_{ij}v_{j}=0$ for $i>m$. Using the Iwasawa
decomposition for $GL(n,\mathbb{R})$ (i.e. Gram-Schmidt) we can write%
\[
B=uak
\]%
with $u$ upper triangular with $1$'s on the main diagonal, $a$ diagonal with
positive diagonal entries $a_{1},...,a_{n}$ and $k\in O(n)$. We have%
\[
B\left[ 
\begin{array}{c}
v_{1} \\ 
\vdots  \\ 
v_{n}%
\end{array}%
\right] =\left[ 
\begin{array}{c}
\sum b_{1j}v_{j} \\ 
\vdots  \\ 
\sum b_{mj}v_{j}%
\end{array}%
\right] =\left[ 
\begin{array}{c}
v_{1} \\ 
\vdots  \\ 
v_{m} \\ 
0 \\ 
\vdots  \\ 
0%
\end{array}%
\right] .
\]%
So 
\[
ak\left[ 
\begin{array}{c}
v_{1} \\ 
\vdots  \\ 
v_{n}%
\end{array}%
\right] =u^{-1}B\left[ 
\begin{array}{c}
v_{1} \\ 
\vdots  \\ 
v_{n}%
\end{array}%
\right] =u^{-1}\left[ 
\begin{array}{c}
v_{1} \\ 
\vdots  \\ 
v_{m} \\ 
0 \\ 
\vdots  \\ 
0%
\end{array}%
\right] =\left[ 
\begin{array}{c}
w_{1} \\ 
\vdots  \\ 
w_{m} \\ 
0 \\ 
\vdots  \\ 
0%
\end{array}%
\right] 
\]%
with $w_{1},...,w_{m}$ linearly independent. Now apply $a^{-1}$ and have%
\[
k\left[ 
\begin{array}{c}
v_{1} \\ 
\vdots  \\ 
v_{n}%
\end{array}%
\right] =\left[ 
\begin{array}{c}
a_{1}^{-1}w_{1} \\ 
\vdots  \\ 
a_{m}^{-1}w_{m} \\ 
0 \\ 
\vdots  \\ 
0%
\end{array}%
\right] =.\left[ 
\begin{array}{c}
t_{1} \\ 
\vdots  \\ 
t_{m} \\ 
0 \\ 
\vdots  \\ 
0%
\end{array}%
\right] .
\]%
Finally, we choose an orthogonal $m\times m$ matrix $T$ that diagonalizes
the form%
\[
\sum_{1\leq i,j\leq m}x_{i}\left\langle t_{i},t_{j}\right\rangle x_{j}.
\]%
Setting 
\[
S=\left[ 
\begin{array}{cc}
T & 0 \\ 
0 & I%
\end{array}%
\right] 
\]%
then $A=Sk$ is the desired orthogonal transformation.
\end{proof}

\begin{corollary}
Let $X_{1},...,X_{n}\in End(V)$ and $v\in V$. Suppose that the span of $%
\left\{ X_{i}v\right\} $ has dimension $m$. Then there exists $A=[a_{ij}]\in
O(n)$ such that if $Z_{i}=\sum a_{ij}X_{j}$ then $Z_{i}v=0$ for $i>m$ and $%
\left\langle Z_{i}v,Z_{j}v\right\rangle =\delta _{ij}c_{i}$ with $c_{i}>0$
for $i\leq m$.
\end{corollary}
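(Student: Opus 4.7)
The plan is to deduce this corollary essentially verbatim from the preceding lemma by applying it to the specific vectors $v_i := X_i v \in V$. Since the span of $\{X_i v\}_{i=1}^n$ has dimension $m$ by hypothesis, these $n$ vectors satisfy exactly the assumption of the lemma (spanning an $m$-dimensional subspace of $V$).

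First I would invoke the lemma to obtain an orthogonal matrix $A = [a_{ij}] \in O(n)$ and positive constants $c_1, \ldots, c_m > 0$ such that the vectors $z_i := \sum_j a_{ij} v_j = \sum_j a_{ij} X_j v$ satisfy $z_j = 0$ for $j > m$ and $\langle z_i, z_j \rangle = \delta_{ij} c_i$ for $1 \leq i, j \leq m$.

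Next I would observe that the map $X \mapsto Xv$ from $\operatorname{End}(V)$ to $V$ is $\mathbb{R}$-linear, so defining $Z_i := \sum_j a_{ij} X_j$ gives
\[
Z_i v = \sum_j a_{ij} X_j v = \sum_j a_{ij} v_j = z_i.
\]
Substituting into the conclusion of the lemma then yields $Z_i v = 0$ for $i > m$ and $\langle Z_i v, Z_j v \rangle = \delta_{ij} c_i$ for $1 \leq i, j \leq m$, which is exactly the conclusion of the corollary.

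There is essentially no obstacle here: the corollary is just the lemma transported through the linear map $X \mapsto Xv$, and the orthogonal matrix $A$ provided by the lemma is used without modification. The only thing to remark on is that the dimension $m$ of the span is preserved under this identification because the $v_i$ are by definition the images of the $X_i$ under this map, so the hypothesis on $\{X_i v\}$ translates directly into the hypothesis of the lemma on $\{v_i\}$.
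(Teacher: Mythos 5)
Your proposal is correct and takes exactly the same approach as the paper: the paper's proof is the one-line instruction to apply the lemma to $v_i = X_i v$, and you have simply spelled out that application explicitly.
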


\begin{proof}
Apply the above lemma to $v_{i}=X_{i}v$, $i=1,...,n$.
\end{proof}

We note that if $X_{1},...,X_{n}$ are self adjoint elements of $End(V)$ and $%
\phi (x)=\sum_{i=1}^{n}\left\langle X_{i}v,v\right\rangle ^{2}$ then $\nabla
\phi (x)=4\sum_{i=1}^{n}\left\langle X_{i}v,v\right\rangle X_{i}v$. In this
case the homogeneity is $m=4$ and thus the suggested strong form of the
inequality is 
\[
C\left\Vert \nabla \phi (x)\right\Vert ^{1+\frac{1}{3}}\geq \phi (x).
\]%
The following theorem of Neeman proves this result if the $X_{i}$ mutually
commute. We include a detailed proof following Neeman's sketch since this
result is so suggestive. We also make clear where the commutivity assumption
is used (exactly one step). In the proof we will use the obvious identity 
\[
\left\Vert \sum_{i=1}^{n}\left\langle X_{i}v,v\right\rangle
X_{i}v\right\Vert ^{2}=\sum_{i,j}\left\langle X_{i}v,v\right\rangle
\left\langle X_{j}v,v\right\rangle \left\langle X_{i}v,X_{j}v\right\rangle 
\]

\begin{theorem}
Let $\{X_{1},...,X_{n}\}$ be a set of self adjoint elements of $End(V)$such
that $[X_{i},X_{j}]=0$ for $1\leq i,j\leq n$. There exists a constant $C>0$
such that of $v\in V$ then%
\[
C\left( \sum_{i,j}\left\langle X_{i}v,v\right\rangle \left\langle
X_{j}v,v\right\rangle \left\langle X_{i}v,X_{j}v\right\rangle \right)
^{2}\geq \left( \sum_{i}\left\langle X_{i}v,v\right\rangle ^{2}\right) ^{3}. 
\]
\end{theorem}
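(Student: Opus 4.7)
My plan is to apply the orthogonalization corollary above and use the commutativity hypothesis to reduce the desired inequality to a concrete spectral estimate, and then close it out using Cauchy--Schwarz together with a convex-geometric dichotomy.

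First I apply the orthogonalization corollary to the vectors $X_1v,\ldots,X_nv$: this produces $A=[a_{ij}]\in O(n)$ such that, with $Z_i=\sum_j a_{ij}X_j$, one has $Z_iv=0$ for $i>m$ and $\langle Z_iv,Z_jv\rangle=\delta_{ij}c_i$ for $i,j\leq m$ with $c_i>0$. Setting $\alpha_i=\langle X_iv,v\rangle$ and $\beta_i=\langle Z_iv,v\rangle$, a short computation shows $\beta=A\alpha$ and $\sum_i\alpha_iX_i=\sum_i\beta_iZ_i$; combined with the highlighted identity $\|\sum_i\alpha_iX_iv\|^2=\sum_{i,j}\alpha_i\alpha_j\langle X_iv,X_jv\rangle$, these yield $\sum_i\alpha_i^2=\sum_i\beta_i^2$ and $\sum_{i,j}\alpha_i\alpha_j\langle X_iv,X_jv\rangle=\sum_{i\leq m}c_i\beta_i^2$. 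Thus it suffices to prove $(\sum_i\beta_i^2)^3\leq C(\sum_i c_i\beta_i^2)^2$.

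The commutativity hypothesis enters at exactly this point: linear combinations $Z_i$ of commuting self-adjoint $X_i$ remain commuting and self-adjoint, so they admit a common orthonormal eigenbasis $e_1,\ldots,e_N$ with $Z_ie_\ell=\nu_i^{(\ell)}e_\ell$. Expanding $v=\sum_\ell v_\ell e_\ell$ and putting $p_\ell=v_\ell^2$ (normalized so $\sum_\ell p_\ell=1$, by homogeneity), one computes $\beta_i=\sum_\ell p_\ell\nu_i^{(\ell)}$, $c_i=\sum_\ell p_\ell(\nu_i^{(\ell)})^2$, and the orthogonality condition becomes $\sum_\ell p_\ell\nu_i^{(\ell)}\nu_j^{(\ell)}=\delta_{ij}c_i$. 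Two Cauchy--Schwarz estimates follow immediately: $\beta_i^2\leq c_i$ (pair $\sqrt{p_\ell}$ with $\sqrt{p_\ell}\,\nu_i^{(\ell)}$), and since the vectors $(\sqrt{p_\ell}\,\nu_i^{(\ell)}/\sqrt{c_i})_\ell\in\mathbb{R}^N$ form an orthonormal system, Bessel's inequality applied to $w_\ell=\sqrt{p_\ell}$ gives $\sum_i\beta_i^2/c_i\leq 1$. Combining these via $\sum\beta_i^2=\sum\sqrt{c_i\beta_i^2}\sqrt{\beta_i^2/c_i}$ yields the weak bound $(\sum_i\beta_i^2)^2\leq\sum_i c_i\beta_i^2$, i.e., the ``$\varepsilon=0$'' form of Lemma 3.

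The main obstacle is bridging the gap between this weak bound and the target: squaring only gives $(\sum\beta_i^2)^4\leq(\sum c_i\beta_i^2)^2$, and the extra factor of $\sum\beta_i^2$ cannot be recovered by pure Cauchy--Schwarz. I expect the finishing argument to use the spectral structure of the joint eigenvalues $\{\Lambda_\ell=(\lambda_i^{(\ell)})\}\subset\mathbb{R}^n$ of the original $X_i$ via a dichotomy: either $0\notin\mathrm{conv}\{\Lambda_\ell:p_\ell>0\}$, in which case $\|\alpha\|=(\sum\beta_i^2)^{1/2}$ is bounded below by a configuration-dependent constant and the target follows from the weak bound; or $0$ lies in that convex hull, so $v$ is near the critical variety $\{\alpha=0\}$, and a first-order Taylor expansion at the critical point shows both $\phi$ and $\sum c_i\beta_i^2$ vanish to order $\delta^2$ in the perturbation parameter, whence $\phi^3=O(\delta^6)$ while $(\sum c_i\beta_i^2)^2=O(\delta^4)$ and the inequality is easy for small $\delta$. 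Fitting these two regimes under a single constant $C$, uniformly in $v$, is the delicate remaining point where Neeman's combinatorial estimate on the moment polytope should be decisive.
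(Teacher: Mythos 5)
Your proposal does not close the argument, and the gap is exactly where the theorem lives. Everything you prove rigorously is the bound $\left(\sum_i\beta_i^2\right)^2\le\sum_i c_i\beta_i^2$, i.e.\ $\phi(v)^2\le\frac{1}{16}\|\nabla\phi(v)\|^2$ on the unit sphere. That is precisely Lemma~3 with $m=4$, $r=1$ --- and Lemma~3 is a one-line Cauchy--Schwarz estimate from $\langle\nabla\phi(x),x\rangle=m\phi(x)$ that requires no commutativity at all. So the joint-eigenbasis/Bessel machinery, while correct, buys you nothing beyond what the trivial $\varepsilon=0$ estimate already gives. Squaring gives $\phi^4\le G^2$ (where $G=\sum_{i,j}\alpha_i\alpha_j\langle X_iv,X_jv\rangle$); the theorem asks for $\phi^3\le CG^2$, and since $\phi\to 0$ on the sphere (unless all $X_i=0$), the former does not imply the latter. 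You identify this honestly, but the sketch you offer to bridge it is not a proof and, as stated, would fail: in your first regime the lower bound on $\|\alpha\|$ depends on the support of $(p_\ell)$, which depends on $v$ and is unbounded below over the sphere; in your second regime both $\phi$ and $G$ can vanish to arbitrarily high order at special critical $v_0$ (e.g.\ when the perturbation direction $w$ is annihilated by all $X_iv_0$), so the naive $\delta^2$ order-count does not hold uniformly. Matching the two regimes with a single constant is the entire content of the theorem, and ``Neeman's combinatorial estimate on the moment polytope'' is not something you may appeal to; it is what you are being asked to prove.

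The paper's actual argument has a different shape. It first reduces, by compactness of the unit sphere, to a local inequality $(\ast)$ near each $v_0\in S$, and then proves $(\ast)$ by induction on $n$, the number of operators. The inductive step splits into two cases: if $X_1v_0,\dots,X_nv_0$ are linearly independent, the Gram matrix $B(v_0)$ is positive definite and a uniform lower bound near $v_0$ gives the inequality directly (in fact with the stronger exponent $3/2$). Otherwise, after applying the orthogonalization corollary one may assume $X_iv_0=0$ for $i>l$ with $l<n$; the key use of commutativity is then the introduction of the orthogonal projection $P$ onto $\mathcal{A}v_0$ (the algebra generated by the $X_i$ applied to $v_0$), which satisfies $X_iP=PX_i$, and this lets one control $\bigl\|\sum_{i\le l}\alpha_iX_iv\bigr\|$ by $\bigl\|\sum_{i=1}^n\alpha_iX_iv\bigr\|$ near $v_0$. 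A dichotomy (Cases I and II in the paper) on the relative sizes of $\bigl\|\sum_{i\le l}\alpha_iX_iv\bigr\|$ and $\bigl\|\sum_{i>l}\alpha_iX_iv\bigr\|$ then reduces the problem to two applications of the inductive hypothesis on fewer than $n$ operators. Your approach has no induction, no mechanism to shrink $n$, and no replacement for the projection lemma; those are the missing ideas.
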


\begin{proof}
Let $S$ be the unit sphere in $V$. We note that the Theorem follows from the
following local version.

($\ast $) If $v_{o}\in S$ then there exists a neighborhood $\Omega _{v}$ of $%
v$ in and $C_{v}$ such that 
\[
C_{v}\left( \sum_{i,j=1}^{n}\left\langle X_{i}x,x\right\rangle \left\langle
X_{j}x,x\right\rangle \left\langle X_{i}x,X_{j}x\right\rangle \right)
^{2}\geq \left( \sum_{i=1}^{n}\left\langle X_{i}x,x\right\rangle ^{2}\right)
^{3},x\in \Omega _{v}. 
\]

Indeed, since $S$ is compact we can choose a finite number $%
v_{1},...,v_{r}\in S$ such that $\cup \Omega _{v_{i}}$ cover $S$. Choose $%
C=\max_{1\leq i\leq r}C_{v_{i}}.$

We will now prove $(\ast )$ by induction on $n$. If $n=1$ then we write $X$
for $X_{1}$ and we may assume that $X$ is diagonal. If $X=0$ then the
theorem is obvious. So assume $X\neq 0$ then we may take an orthonormal
basis $v_{1},...,v_{N}$ of $V$ such that $Xv_{i}=a_{i}v_{i}$ with $a_{i}\in 
\mathbb{R}$, $a_{i}\neq 0$ for $i=1,...,k$ and $a_{i}=0$ for $i>k$ and $%
|a_{i}|\geq \left\vert a_{i+1}\right\vert $. Now if $v=\sum x_{i}v_{i}$ then%
\[
\left\langle Xv,v\right\rangle \left\langle Xv,v\right\rangle \left\langle
Xv,Xv\right\rangle =\left\langle Xv,v\right\rangle ^{2}\sum
a_{i}^{2}x_{i}^{2}\geq 
\]%
\[
a_{k}^{2}\left\langle Xv,v\right\rangle ^{2}\sum_{i=1}^{k}x_{i}^{2}\geq 
\frac{a_{k}^{2}}{|a_{1}|}\left\langle Xv,v\right\rangle ^{2}\sum \left\vert
a_{i}\right\vert x_{i}^{2}\geq \frac{a_{k}^{2}}{|a_{1}|}\left\langle
Xv,v\right\rangle ^{2}\left\vert \left\langle Xv,v\right\rangle \right\vert
. 
\]%
This proves the theorem for $n=1$ hence $(\ast )$ in this case.

Now we assume that $(\ast )$. is true for $1\leq k<n$ and we prove it for $n$%
. If $\cap \ker X_{i}\neq (0)$ then the theorem follows from the case when $%
V $ is replaced by $Z=\left( \cap \ker X_{i}\right) ^{\bot }$ and the $X_{i}$
are replaced by $X_{i|Z}$. Thus we may assume that $\cap \ker X_{i}=(0)$. We
are now ready to prove the inductive step. Consider $v_{o}\in S$.

Let $B(v)$ denote the $n\times n$ matrix with $i,j$ entry $\left\langle
X_{i}v,X_{j}v\right\rangle $. Suppose that $v_{o}\in V$ is such that $%
X_{1}v_{o},...,X_{n}v_{0}$ are linearly independent. Then $B(v_{o})$ is
positive definite. Thus there is a compact neighborhood, $U$, of $v_{0}$in $S
$ and $C_{1}>0$ such that $B(v_{0})-C_{1}I$ is positive semidefinite. Thus
on $U$ we have%
\[
\sum_{i,j}\left\langle X_{i}v,v\right\rangle \left\langle
X_{j}v,v\right\rangle \left\langle X_{i}v,X_{j}v\right\rangle \geq
C_{1}\sum_{i}\left\langle X_{i}v,v\right\rangle ^{2}\text{.}
\]%
We note that there is a positive constant $C_{2}$ such that if $v\in S$ then 
$\left\vert \left\langle X_{i}v,v\right\rangle \right\vert \leq
C_{2}\left\langle v,v\right\rangle =C_{2}$. So%
\[
\left( \sum_{i}\left\langle X_{i}v,v\right\rangle ^{2}\right) ^{\frac{1}{2}%
}\leq \sqrt{n}C_{2}.
\]%
Thus on $U$ we have%
\[
\sum_{i,j}\left\langle X_{i}v,v\right\rangle \left\langle
X_{j}v,v\right\rangle \left\langle X_{i}v,X_{j}v\right\rangle \geq \frac{%
C_{1}}{\sqrt{n}C_{2}}\left( \sum_{i}\left\langle X_{i}v,v\right\rangle
^{2}\right) ^{\frac{3}{2}}.
\]%
The desired inequality. We may thus assume that the span of $\left\{
X_{i}v_{o}\right\} _{i=1}^{n}$ has dimension $1\leq l<n$.

Let $A=[a_{ij}]\in O(n)$ be as in the corollary above for $v_{o}$. We note
that $\sum_{i,j}\left\langle X_{i}v,v\right\rangle \left\langle
X_{j}v,v\right\rangle \left\langle X_{i}v,X_{j}v\right\rangle $ and $%
\sum_{i}\left\langle X_{i}v,v\right\rangle ^{2}$ are unchanged under the
transformation $X_{i}\rightarrow \sum a_{ij}X_{j}.$Replacing $X_{j}$ with $%
\sum_{i}a_{ji}X_{i}$ we may assume that if $l=\dim
Span\{X_{1}v_{o},...,X_{n}v_{o}\}$ then $X_{i}v_{o}=0$ for $i>l$ and the $%
X_{i}v_{o}$ for $i\leq l$ are mutually orthogonal. We come now to the only
place we use the assumption that $[X_{i},X_{j}]=0$ for $1\leq i,j\leq n$.

Let $\mathcal{A}$ denote the algebra generated by the $X_{i}$. Let $V_{0}=%
\mathcal{A}v_{o}$ and let $P:V\rightarrow V_{0}$ be the orthogonal
projection. Then we note that $X_{i}P=PX_{i}$ all $i$ and $X_{i}P=0$ if $i>l$%
. Now 
\[
\left\Vert \sum_{i=1}^{n}\left\langle X_{i}v,v\right\rangle
X_{i}v\right\Vert \geq \left\Vert \sum_{i=1}^{n}\left\langle
X_{i}v,v\right\rangle PX_{i}v\right\Vert = 
\]%
\[
\left\Vert \sum_{i=1}^{n}\left\langle X_{i}v,v\right\rangle
X_{i}Pv\right\Vert =\left\Vert \sum_{i=1}^{l}\left\langle
X_{i}v,v\right\rangle X_{i}Pv\right\Vert 
\]%
Noting that%
\[
\left[ \left\langle X_{i}v_{o},X_{j}v_{o}\right\rangle \right] _{1\leq
i,j\leq l}=\left[ \left\langle X_{i}Pv_{o},X_{j}Pv_{o}\right\rangle \right]
_{1\leq i,j\leq l} 
\]%
is positive definite we see that there exists $U$ be a compact neighborhood
of $v_{o}$ such that%
\[
B_{1}(\nu )=\left[ \left\langle X_{i}Pv,X_{j}Pv\right\rangle \right] _{1\leq
i,j\leq l} 
\]%
is positive definite for $v\in U$. We also note that we can choose a perhaps
smaller neighborhood such that 
\[
B_{2}(\nu )=\left[ \left\langle X_{i}v,X_{j}v\right\rangle \right] _{1\leq
i,j\leq l} 
\]%
is also positive definite for $\nu \in U$. Thus there is a constant $C_{3}>0$
such that $B_{1}(v)-C_{3}B_{2}(\nu )$ is positive semidefinite for $v\in U$.
So
\end{proof}

\[
\sum_{i,j=1}^{n}\left\langle X_{i}v,v\right\rangle \left\langle
X_{j}v,v\right\rangle \left\langle X_{i}v,X_{j}v\right\rangle \geq
\sum_{i,j=1}^{l}\left\langle X_{i}v,v\right\rangle \left\langle
X_{j}v,v\right\rangle \left\langle X_{i}Pv,X_{j}Pv\right\rangle \geq
C_{3}\sum_{i,j=1}^{l}\left\langle X_{i}v,v\right\rangle \left\langle
X_{j}v,v\right\rangle \left\langle X_{i}v,X_{j}v\right\rangle 
\]%
Set $C_{4}=\frac{1}{C_{5}}$. We have shown that if $v\in U$ then 
\[
C_{4}\left\Vert \sum_{i=1}^{n}\left\langle X_{i}v,v\right\rangle
X_{i}v\right\Vert \geq \left\Vert \sum_{i=1}^{l}\left\langle
X_{i}v,v\right\rangle X_{i}v\right\Vert 
\]%
There are obviously two possibilities for every $v\in S$

I. $2\left\Vert \sum_{i=1}^{l}\left\langle X_{i}v,v\right\rangle
X_{i}v\right\Vert \geq \left\Vert \sum_{i=l+1}^{n}\left\langle
X_{i}v,v\right\rangle X_{i}v\right\Vert $ or

II. $2\left\Vert \sum_{i=1}^{l}\left\langle X_{i}v,v\right\rangle
X_{i}v\right\Vert <\left\Vert \sum_{i=l+1}^{n}\left\langle
X_{i}v,v\right\rangle X_{i}v\right\Vert .$

We write $a=\left\Vert \sum_{i=1}^{l}\left\langle X_{i}v,v\right\rangle
X_{i}v\right\Vert ,b=\left\Vert \sum_{i=l+1}^{n}\left\langle
X_{i}v,v\right\rangle X_{i}v\right\Vert .$

We assume that $v\in U$. In case I. Observing that if $a,b\geq 0$ and $%
2a\geq b$ then $3a=a+2a\geq a+b$ thus in case I,%
\[
3C_{4}\left\Vert \sum_{i=1}^{n}\left\langle X_{i}v,v\right\rangle
X_{i}v\right\Vert \geq \left( \left\Vert \sum_{i=1}^{l}\left\langle
X_{i}v,v\right\rangle X_{i}v\right\Vert +\left\Vert
\sum_{i=l+1}^{n}\left\langle X_{i}v,v\right\rangle X_{i}v\right\Vert \right) 
\]%
and in case II. We have%
\[
\left\Vert \sum_{i=1}^{n}\left\langle X_{i}v,v\right\rangle
X_{i}v\right\Vert \geq \left\Vert \sum_{i=l+1}^{n}\left\langle
X_{i}v,v\right\rangle X_{i}v\right\Vert -\left\Vert
\sum_{i=1}^{l}\left\langle X_{i}v,v\right\rangle X_{i}v\right\Vert .
\]%
This time $b\geq 2a$ then%
\[
b-a\geq \frac{1}{3}b+\left( \frac{2}{3}b-a\right) =
\]%
\[
\frac{1}{3}b+\frac{1}{6}b\geq \frac{1}{3}\left( a+b\right) .
\]%
Thus in case II. We have%
\[
\left\Vert \sum_{i=1}^{n}\left\langle X_{i}v,v\right\rangle
X_{i}v\right\Vert \geq \frac{1}{3}\left( \left\Vert
\sum_{i=1}^{l}\left\langle X_{i}v,v\right\rangle X_{i}v\right\Vert
+\left\Vert \sum_{i=l+1}^{n}\left\langle X_{i}v,v\right\rangle
X_{i}v\right\Vert \right) .
\]%
Thus if $C_{5}$ is the maximum of $3$ and $3C_{4}$ we have for all $v\in U$ 
\[
C_{5}\left\Vert \sum_{i=1}^{n}\left\langle X_{i}v,v\right\rangle
X_{i}v\right\Vert \geq \left\Vert \sum_{i=1}^{l}\left\langle
X_{i}v,v\right\rangle X_{i}v\right\Vert +\left\Vert
\sum_{i=l+1}^{n}\left\langle X_{i}v,v\right\rangle X_{i}v\right\Vert 
\]%
Since $0<l<n$ the inductive hypothesis implies that there is an open
neighborhood $W$ of $v_{o}$ in $U$ and a constant $C_{6}>0$ such that%
\[
\left\Vert \sum_{i=1}^{l}\left\langle X_{i}v,v\right\rangle
X_{i}v\right\Vert +\left\Vert \sum_{i=l+1}^{n}\left\langle
X_{i}v,v\right\rangle X_{i}v\right\Vert \geq 
\]%
\[
C_{6}\left( \left( \sum_{i=1}^{l}\left\langle X_{i}v,v\right\rangle
^{2}\right) ^{\frac{3}{4}}+\left( \sum_{i=l+1}^{n}\left\langle
X_{i}v,v\right\rangle ^{2}\right) ^{\frac{3}{4}}\right) .
\]%
Thus for $v\in W$ we have%
\[
C_{5}\left( \sum_{i,j}\left\langle X_{i}v,v\right\rangle \left\langle
X_{j}v,v\right\rangle \left\langle X_{i}v,X_{j}v\right\rangle \right) ^{2}=
\]%
\[
C_{5}\left\Vert \sum_{i=1}^{n}\left\langle X_{i}v,v\right\rangle
X_{i}v\right\Vert ^{4}\geq C_{6}^{4}C_{5}\left( \left(
\sum_{i=1}^{l}\left\langle X_{i}v,v\right\rangle ^{2}\right) ^{\frac{3}{4}%
}+\left( \sum_{i=l+1}^{n}\left\langle X_{i}v,v\right\rangle ^{2}\right) ^{%
\frac{3}{4}}\right) ^{4}\geq 
\]%
\[
C_{6}^{4}C_{5}\left( \left( \sum_{i=1}^{l}\left\langle X_{i}v,v\right\rangle
^{2}\right) ^{3}+\left( \sum_{i=l+1}^{n}\left\langle X_{i}v,v\right\rangle
^{2}\right) ^{3}\right) .
\]%
We note that if $a,b\geq 0$ then $a^{3}+b^{3}\geq \frac{1}{8}(a+b)^{3}$. We
may assume $a\leq b.$ Then if $a=0$ the inequality is obvious so assume that 
$0<a\leq b$. Set $x=\frac{b}{a}\geq 1$ then $8+8x^{3}>1+3x^{3}+3x^{3}+x^{3}%
\geq 1+3x+3x^{2}+x^{2}=(1+x)^{3}$. Thus%
\[
C_{5}\left\Vert \sum_{i=1}^{n}\left\langle X_{i}v,v\right\rangle
X_{i}v\right\Vert ^{4}\geq \frac{C_{5}C_{6}^{4}}{8}\left(
\sum_{i=1}^{l}\left\langle X_{i}v,v\right\rangle
^{2}+\sum_{i=l+1}^{n}\left\langle X_{i}v,v\right\rangle ^{2}\right) ^{3}
\]%
for $v\in W$. This completes the induction.

\begin{center}
\bigskip

{\large Bibliography}

\bigskip
\end{center}

\noindent\lbrack H] Lars H\"{o}rmander, The Analysis of Linear Partial
Differential Operators II, Differential Operators with Constant Coefficients,
Springer-Verlag, Berlin, 1983, Appendix A, 362--371.\smallskip

\noindent\lbrack KN] George Kempf and Linda Ness, The length of vectors in
representation spaces, Algebraic geometry (Proc. Summer Meeting, Univ.
Copenhagen, 1978), Lecture Notes in Math. 732, Berlin, New York:
Springer-Verlag,1979 233--243\smallskip

\noindent\lbrack L] S. Lojasiewicz, Ensembles semi-analytiques, Preprint IHES,
1965.\smallskip

\noindent\lbrack N] Amnon Neeman, The topology of quotient varieties, Annals
of Math. (122),1985, 419--459.\smallskip

\noindent\lbrack RS] R. W. Richardson and P.\ J. Slodowy, Minimum Vectors for
Real Reductive Algebraic Groups, J. London Math. Soc. (2) 92 (1990),
409--429.\smallskip

\noindent\lbrack S] Gerald W. Schwarz, Topological methods in algebraic
transformation groups, 135-151, Progress in Mathematics, Volume 80,
Birkh\"{a}user, Boston, 1989.\smallskip

\noindent\lbrack W] Nolan R. Wallach, Geometric invariant theory over the
real and complex numbers, to appear, Springer.\smallskip

\end{document}